\documentclass[runningheads,a4paper]{llncs}
\usepackage{amsmath,amssymb}
\setcounter{tocdepth}{3}
\usepackage{graphicx}
\usepackage{url}
\makeatletter
\def\blfootnote{\gdef\@thefnmark{}\@footnotetext}
\makeatother

\urldef{\mailsb}\path| oguz.yayla@hacettepe.edu.tr|
\urldef{\mailsa}\path| busra.ozden@hacettepe.edu.tr|

\newcommand{\keywords}[1]{\par\addvspace\baselineskip
\noindent\keywordname\enspace\ignorespaces#1}

\usepackage[left=3cm,top=3cm,right=3cm,bottom=2cm]{geometry}
\usepackage{appendix}
\usepackage[pagewise]{lineno}

\usepackage{tikz}
 
\usetikzlibrary{arrows,automata,backgrounds,fit,shapes,decorations.pathreplacing}

\RequirePackage[T1]{fontenc}
\RequirePackage[utf8]{inputenc}



\renewenvironment{proof}{\noindent\textit{Proof.}}{\hfill{$\Box$}}


\newcommand{\boxtensor}{{\Box\kern-9.03pt\raise1.42pt\hbox{$\times$}}}



\newcommand{\C}{{\mathbb C}}

\newcommand{\N}{{\mathbb N}}

\newcommand{\Z}{{\mathbb Z}}


\newcommand{\be}{\begin{eqnarray}}
\newcommand{\ee}{\end{eqnarray}}
\newcommand{\nn}{{\nonumber}}
\newcommand{\dd}{\displaystyle}


\begin{document}

\mainmatter  

\title{Partial direct product difference sets and sequences with ideal autocorrelation}

\titlerunning{PDPDS and sequences with ideal autocorrelation}

\author{Büşra Özden \and O\u guz Yayla}
\authorrunning{Özden, Yayla}

\institute{Hacettepe University, Department of Mathematics\\Beytepe, 06800, Ankara, Turkey\\ \mailsa\\
\mailsb}

\toctitle{Lecture Notes in Computer Science}
\tocauthor{Authors' Instructions}
\maketitle

\begin{abstract}
In this paper, we study the sequences  with (non-consecutive) two zero-symbols and ideal autocorrelation, which are also known as almost $m$-ary nearly perfect sequences. We show that these sequences are equivalent to $\ell$-partial direct product difference sets (PDPDS), then we extend known results on the sequences with two consecutive zero-symbols to non-consecutive case.  
Next, we study the notion of multipliers and orbit combination for  $\ell$-PDPDS. 
Finally, we present a  construction method for a family of almost quaternary sequences with ideal autocorrelation by using cyclotomic classes. 
\keywords{Nearly perfect sequence, Ideal autocorrelation, Partial direct product difference set, cyclotomic classes, Quaternary sequence}
\blfootnote{\textup{2020} \textit{Mathematics Subject Classification}: \textup{05B10, 94A55.}}
\end{abstract}

\section{Introduction}
Let $\zeta_m\in \C$ be a primitive $m$-th root of unity for some integer $m$.
	A sequence $\underline{a} = (a_0,a_1,\ldots,a_{n-1},\ldots)$ of period $n$ with $a_i = \zeta_m ^{b_i}$ for some integer $b_i$, $i =0, 1, \ldots,n-1$ is called an \textit{$m$-ary sequence}. If $a_{i_j} = 0$ for all $j=1,2,\ldots , s$ where $\{ i_1,i_2, \ldots , i_s \} \subset \{0,1, \ldots , n-1\}$ and $a_i = \zeta_p^{b_i}$ for some integer $b_i$, $i \in  \{0,1, \ldots , n-1\} \backslash \{ i_1,i_2, \ldots , i_s \}$, then we call $\underline{a}$ an \textit{almost $m$-ary sequence with $s$ zero-symbols}. 
	For instance, $\underline{a}=(\zeta_6^4,1,\zeta_6^2,\zeta_6^5,1,\zeta_6,
	\ldots)$ is a $6$-ary sequence of period $6$ and  $\underline{a}=(0,\zeta_4^3,1,\zeta_4^2,0,0,\zeta_4,1,\zeta_4^2,\ldots)$ is an almost $7$-ary sequence with 3 zero-symbols of period $9$.  It is widely used that a sequence with one zero-symbol is called an almost $m$-ary sequence. But in this paper we use this notation for a $m$-ary sequence with $s$ zero-symbols, for $s \geq 0$. 
	
	For a sequence $\underline{a}$ of period $n$, its \textit{autocorrelation 
		function} $C_{\underline{a}}(t)$ is defined as
	\be \label{df:autocor}
	C_{\underline{a}}(t) = \sum_{i=0}^{n-1}{a_i\overline{a_{i+t}}},
	\ee
	for $0\leq t \leq n-1$ where $\overline{a}$ is the complex conjugate of $a$.  The values $C_{\underline{a}}(t)$ at $1 \leq t \leq n-1$ are called \textit{the out-of-phase autocorrelation coefficients} of $\underline{a}$. Note that the autocorrelation function of $\underline{a}$ is periodic with $n$.
	
	We call an almost $m$-ary sequence $\underline{a}$ of period $n$ a \textit{nearly perfect
		sequence} (NPS) of type $(\gamma_1, \gamma_2)$ if all out-of-phase autocorrelation coefficients of $\underline{a}$ are either $\gamma_1$ or $\gamma_2$. We write \textit{NPS of type $\gamma$} to denote an NPS of type $(\gamma,\gamma)$. Moreover, a sequence is called \textit{perfect sequence} (PS) if it is an NPS of type $(0,0)$. We also note that there is another notion of \textit{almost perfect sequences} which is an $m$-ary sequence $\underline{a}$ of period $n$ having $C_{\underline{a}}(t)=0$ for all $1\leq t \leq n-1$ -with exactly one exception \cite{jungnickel1999perfect}.

Sequences are used in satellite telecommunication, cryptographic function design, wireless networks, signal processing, radar systems, and modern cell phones (see \cite{ChihLin1995MulticodeCW,golomb2005signal,hollon2018new,kumar,Schmidt2009QuaternaryCC,ozdenyayla2019}). For instance, they are used in Code Division Multiple Access (CDMA), where the sequences with ideal correlation are important because a signal should not be affected by other signals in order to provide high-quality communication. Hence, designing new  $m$-ary sequences with ideal autocorrelation is important and many authors have been studied in this area so far. 

It is known that the  difference sets and the sequences are related topics in combinatorics, and their equivalence was widely studied. For binary case, Jungnickel and Pott~\cite{jungnickel1999perfect} obtained a relation between a binary NPS of type $(n-4(k-\lambda))$ and a cyclic $(n,k,\lambda)$ difference set (DS). For nonbinary case, Ma and Ng
~\cite{ma2009non} obtained a relation between an $m$-ary NPS of type $|\gamma|\leq 1$ and a direct product difference set (DPDS) for a prime number $m$. Later, Chee et al.~\cite{chee2010almost} extended the methods due to Ma and Ng~\cite{ma2009non} to almost $m$-ary NPS of types $\gamma=0$ and $\gamma=-1$ with one zero-symbol. The second author~\cite{yayla2016nearly} proved an equality between an $m$-ary NPS of type $\gamma$  and a DPDS for an arbitrary integer $ \gamma$. In \cite{ozdenyayla2020}, the authors studied the $m$-ary NPS of type $(\gamma_1,\gamma_2)$ with two consecutive zero-symbols, in which they proved that there is a relationship between a partial direct product difference sets (PDPDS) and an $m$-ary NPS of type $(\gamma_1,\gamma_2)$ with two consecutive zero-symbols. Moreover, they showed some non-existence results for these kind of sequences. It is also conjectured that an almost $m$-ary NPS with two consecutive zero-symbols does not exit for $m \ge 5$ (see Conjecture \ref{conj:1}). This motivated us to study the non-consecutive zero-symbols case and the only one-zero-symbol case.

In this paper, we firstly study almost $m$-ary NPS of type $(\gamma_1,\gamma_2)$ with two zero-symbols and extend the definition of PDPDS to $\ell$-PDPDS (see Definition \ref{df:l-pDPDS}). Then we present a method to construct the difference sets $\ell$-$(n,n,2n-2,n-2,n,n-2,2,1)$-PDPDS and  $(n,n,2n-2,n-2,n-2,2)$-DPDS (see Proposition \ref{prop:lpdpds:cons}). Similar to \cite{ozdenyayla2020}, we show that an almost $m$-ary NPS of type $(\gamma_1,\gamma_2)$ with non-consecutive two zero-symbols is equivalent to an $\ell$-PDPDS (see Proposition \ref{th:l_PDPDS}) and their non-existence for $\gamma_2 \le 3$ (see Proposition \ref{prop:non}).
Next, we study the notion of multipliers and the orbit combination for $\ell$-PDPDS, and we prove a necessary condition on their multiplier set (see Proposition \ref{prop:multiplier}). Then, we could obtain a NPS with non-consecutive zero-symbols by a union of some orbits under its multipliers in contrast to consecutive case (see Proposition \ref{prop:orbit} and Example \ref{ex:orbit}). 
Besides, we show that an almost $m$-ary NPS of type $(\gamma_1, \gamma_2)$ and period $n+2$ with two consecutive zero-symbols is symmetric for an odd prime $m$ (see Corollary \ref{cor:sym}). 
We also show a method to get 2m $m$-ary NPS  from one $m$-ary sequence with the same autocorrelation values 
(see Theorem \ref{thm:p_ary-const.}).

In the second part of this paper, we consider the cyclotomic classes  to construct an NPS with only one zero-symbol. 
Shi et. al.~in \cite[Theorem 3]{shi2019family} showed that almost $m$-ary $\sigma$-sequence is an almost $m$-ary NPS of type $-1$ with one-zero-symbol (see also Theorem \ref{thm:m_ary-const11}). 
In this study, we particularly consider the case $q=2p=mf+l$ for a prime $p$, $m,f\in\Z^+$ and $l=p+1$. In this case, we show that the almost $m$-ary $\sigma$-sequence is an almost $m$-ary NPS of type $(0,-1)$ with $p+1$ zero-symbols (see Proposition \ref{thm:m_ary-const1}). We prove that if $f$ is even, then the almost $m$-ary $\sigma$-sequence is symmetric except for $a_0$ (see Proposition \ref{prop:sym}). 
Furthermore, we show that quaternary $\sigma$-sequences for identity permutation $\sigma$ are almost quaternary NPS of type $\frac{q-3}{2}$ with one zero-symbol (see Theorem \ref{thm:m_ary-const2}).

This paper is organized as follows. In Section \ref{sec:pDPDS} we define $\ell$-PDPDS and study its equivalence to an NPS with (non-consecutive) two zero-symbols. Then, we present two construction methods for a family of almost $m$-ary NPS in Section \ref{sec:cons}.

\section{$\ell$-Partial Direct Product Difference Sets}
\label{sec:pDPDS}

It is widely studied in the literature that  any kind of perfect sequence can be identified by a suitably defined difference set. Hence, one can devise the  properties of the perfect sequence via the corresponding difference set, and vice-versa. In this study, we consider almost $m$-ary nearly perfect sequences (NPS) with two zero-symbols non-necessarily consecutive, and their corresponding difference set definition, which we give below. 

\begin{definition}[$\ell$-PDPDS]\label{df:l-pDPDS}
	Let $G = H \times P$, where  $H= \langle h \rangle$ and $P= \langle g \rangle $ of orders $n$ and $m$, respectively. Let  $\ell\in H\backslash\{e_H\} $ be an integer. A subset $R$ of $G$, $|R| =k$, is called an $\ell$-$(n,m,k,\lambda_1,\lambda_2,\lambda_3,\mu_1,\mu_2)$-partial direct product difference set (PDPDS) in $G$ relative to $H$ and $P$ if differences $r_1r_2^{-1}$, $r_1, r_2 \in R$ with $r_1 \neq r_2$ represent
	\begin{itemize}
		\item all elements of $H\backslash\{h^\ell,h^{n-\ell},e_H\}$ exactly $\lambda_1$ times,
		\item all non identity element of $P$ exactly $\lambda_2$ times,
		\item all elements of $\{h^\ell,h^{n-\ell}\}$ exactly $\lambda_3$ times,
		\item all elements of $(H\backslash\{h^\ell,h^{n-\ell},e_H\}) \times (P\backslash\{e_P\})$ exactly $\mu_1$ times,
		\item all elements of $\{h^\ell ,h^{n-\ell}\} \times (P\backslash\{e_P\})$ exactly $\mu_2$ times.
	\end{itemize}
\end{definition}

In the group-ring algebra notation, if $R$ is an $\ell$-$(n,m,k,\lambda_1,\lambda_2,\lambda_3,\mu_1,\mu_2)$-PDPDS in $G$ relative to $H$ and $P$ then
\be\label{eqn:l-PDPDS}
\begin{aligned}
	RR^{(-1)} &= (k - \lambda_1 - \lambda_2 + \mu_1) + (\lambda_1 - \mu_1)H + (\lambda_2 - \mu_1)P + \mu_1 G+\\
	&(\lambda_3 - \lambda_1)\{h^\ell,h^{n-\ell}\}+ (\mu_2-\mu_1)(\{h^\ell ,h^{n-\ell}\} \times \{g,g^2,\ldots,g^{m-1}\})
\end{aligned}
\ee			
holds in $\Z[G]$.

\begin{remark} Let $G = H \times P$, $H= \langle h \rangle$ and $P= \langle g \rangle $ of order $n$ and $m$. An 1-$(n,m,k,\lambda_1,\lambda_2,\lambda_1,\mu,\mu)$-PDPDS in $G$ relative to $H$ and $P$ is an $(n,m,k,\lambda_1,\lambda_2,\mu)$-DPDS in $G$ relative to $H$ and $P$. Moreover,  an 1-$(n,m,k,\lambda,0,\lambda,\lambda,\lambda)$-PDPDS in $G$ relative to $H$ and $P$ is an $(n,m,k,\lambda)$-RDS in $G$ relative to $P$. Finally, an 1-$(n,m,k,\lambda,\lambda,\lambda,\lambda,\lambda)$-PDPDS in $G$ relative to $H$ and $P$ is a cyclic $(nm,k,\lambda)$-DS in $G$. Throughout the paper,  the notation $(n,m,k,\lambda_1,\lambda_2,\lambda_3,\mu_1,\mu_2)$-PDPDS will denote 1-$(n,m,k,\lambda_1,\lambda_2,\lambda_3,\mu_1,\mu_2)$-PDPDS.
\end{remark}

The construction methods of difference sets are widely studied in the literature (see \cite{beth1999design,colbourn2006handbook}).
Below, we give a construction method of a family of $\ell$-PDPDS.

\begin{proposition}\label{prop:lpdpds:cons}
Let $G = H \times H$ be a set, $H= \langle h \rangle$ be a cyclic group of order $n$, and $a,b,\ell\in \Z^+ \cup \{0\}$ such that $|a-b|=\ell$. Let  $R \subset G$ be a set defined as $$R=\{h^a\times H \backslash \{h^b\}\} \cup \{H\backslash \{h^b\} \times h^b\}.$$
If $\ell\neq 0$, then $R$ is an $\ell$-$(n,n,2n-2,n-2,n,n-2,2,1)$-PDPDS in $G$ relative to $H$. If $\ell=0$, then $R$ is an $(n,n,2n-2,n-2,n-2,2)$-DPDS in $G$ relative to $H$. 
\end{proposition}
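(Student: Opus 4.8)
The plan is a single computation in the group ring $\Z[G]$, with the outcome matched against the defining identity (\ref{eqn:l-PDPDS}). First I would record the structure of $R$. Writing elements of $G=H\times H$ as pairs $(h^i,h^j)$, and identifying $H$ with the first factor and $P$ with the second copy of $H$ inside $\Z[G]$, the set splits as a disjoint union $R=A\cup B$ where $A=\{h^a\}\times(H\setminus\{h^b\})$ lies in the coset $h^aP$ and $B=(H\setminus\{h^b\})\times\{h^b\}$ lies in the coset $Hh^b$; these two pieces share no element, so $|R|=2(n-1)=2n-2=k$ as required. I would also set $D:=h^\ell+h^{n-\ell}\in\Z[G]$ and note that, as a subset of the first factor, $\{h^{a-b},h^{b-a}\}=\{h^\ell,h^{n-\ell}\}$ because $\ell=|a-b|$; in particular $h^{a-b}+h^{b-a}=D$.

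Next I would expand $RR^{(-1)}=AA^{(-1)}+BB^{(-1)}+AB^{(-1)}+BA^{(-1)}$ and evaluate the four summands by elementary counting. The diagonal terms are one-dimensional: $AA^{(-1)}=\sum_{j,j'\neq b}(e_H,h^{j-j'})$, and counting solutions of $j-j'=d$ with $j,j'\neq b$ gives multiplicity $n-1$ for $d=0$ and $n-2$ otherwise, so $AA^{(-1)}=e_G+(n-2)P$ and, symmetrically, $BB^{(-1)}=e_G+(n-2)H$. For the cross terms, $AB^{(-1)}=\sum_{i\neq b,\,j\neq b}(h^{a-i},h^{j-b})$; reindexing by $i'=a-i$ (which runs over all residues except $a-b$) and $j'=j-b$ (running over all residues except $0$) identifies this with $G$ minus the slice $h^{a-b}P$, minus the slice $H$, plus the point $h^{a-b}$ that was removed twice, i.e.\ $AB^{(-1)}=G-h^{a-b}P-H+h^{a-b}$, and likewise $BA^{(-1)}=G-h^{b-a}P-H+h^{b-a}$. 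Summing the four contributions and collecting $h^{a-b}+h^{b-a}=D$ yields
\[
RR^{(-1)}=2e_G+(n-4)H+(n-2)P+2G-D\,(P-e_G),
\]
in which $D\,(P-e_G)$ is exactly the group-ring element $\{h^\ell,h^{n-\ell}\}\times\{g,g^2,\dots,g^{m-1}\}$ occurring in (\ref{eqn:l-PDPDS}).

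Finally I would read off the parameters. If $\ell\neq0$, then $h^\ell,h^{n-\ell}\in H\setminus\{e_H\}$ and the displayed identity is literally the right-hand side of (\ref{eqn:l-PDPDS}) with $k=2n-2$ and $(\lambda_1,\lambda_2,\lambda_3,\mu_1,\mu_2)=(n-2,n,n-2,2,1)$ (here $\lambda_3-\lambda_1=0$ kills the $\{h^\ell,h^{n-\ell}\}$ term and $\mu_2-\mu_1=-1$ produces the last term, while $\lambda_1-\mu_1=n-4$, $\lambda_2-\mu_1=n-2$, $\mu_1=2$ and $k-\lambda_1-\lambda_2+\mu_1=2$), so $R$ is the claimed $\ell$-PDPDS. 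If $\ell=0$, then $a=b$, so $D=2e_G$, the term $D\,(P-e_G)$ collapses to $2P-2e_G$, and the identity becomes $RR^{(-1)}=4e_G+(n-4)H+(n-4)P+2G$; by the Remark following Definition \ref{df:l-pDPDS} this is precisely the defining identity of an $(n,n,2n-2,n-2,n-2,2)$-DPDS in $G$ relative to $H$.

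There is no conceptual difficulty in this argument; the step needing the most care is the reindexing of the two cross terms $AB^{(-1)}$ and $BA^{(-1)}$ — correctly identifying which coset of $P$ is subtracted off and which single point is added back — together with the bookkeeping that rewrites $\{h^{a-b},h^{b-a}\}$ as $\{h^\ell,h^{n-\ell}\}$. This last point also makes transparent why $\ell\neq0$ is needed for the two distinguished elements to differ from $e_H$, and why the $\ell=0$ specialization necessarily degenerates to an ordinary DPDS.
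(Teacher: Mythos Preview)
Your proof is correct and follows essentially the same approach as the paper: both split $R=A\cup B$, expand $RR^{(-1)}$ into the four pieces $AA^{(-1)},BB^{(-1)},AB^{(-1)},BA^{(-1)}$, and evaluate each by direct counting. The only difference is cosmetic---the paper lists the resulting multisets and checks them against Definition~\ref{df:l-pDPDS}, whereas you package the same computation in group-ring notation and match it against the identity~(\ref{eqn:l-PDPDS}).
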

\begin{proof}
    Let $A$ and $B$ be the sets defined as $h^a\times H \backslash \{h^b\}$, $H\backslash \{h^b\} \times h^b$, respectively.  Therefore we can write $$RR^{-1}=AA^{-1}\cup AB^{-1} \cup BB^{-1} \cup BA^{-1}.$$
    Now we examine this difference set one by one. We first consider the case $\ell\neq 0$. If $a-b=\ell$, then
    $$AA^{-1}=\{\underbrace{(0,0),}_{n-1 \text{ times }}\underbrace{(0,1)}_{n-2\text{ times }}, \ldots, \underbrace{(0,n-1)}_{n-2\text{ times }}\},$$
    $$AB^{-1}=\Z_n\backslash\{\ell\}\times\Z_n^*$$
    $$BB^{-1}=\{\underbrace{(0,0),}_{n-1 \text{ times }}\underbrace{(1,0)}_{n-2\text{ times }},\ldots ,\underbrace{(n-1,0)}_{n-2\text{ times }}\},$$
    $$BA^{-1}=\Z_n\backslash\{n-\ell\}\times\Z_n^*.$$
      Hence, by Definition \ref{df:l-pDPDS}, $R$ is an $\ell$-$(n,n,2n-2,n-2,n,n-2,2,1)$-PDPDS in $G$ relative to $H$. If  $a-b=-\ell$, the sets $AB^{-1}$ and $BA^{-1}$ given above are replaced with each other, and so we get the result. 
     
     Similarly, for the case $\ell=0$, we can see that $R$ is an $(n,n,2n-2,n-2,n-2,2)$-DPDS in $G$ relative to $H$.
\end{proof}
\begin{example}
    Let $G=\Z_7\times \Z_7$, $a=5$ and $b=3$. Then we get
    $$R=\{(5,0),(5,1),(5,2),(5,4),(5,5),(5,6),(0,3),(1,3),(2,3),(4,3),(5,3),(6,3)\}$$ which is an $2$-$(7,7,12,5,7,5,2,1)$-PDPDS in $G$ relative to $\Z_7$. Next we consider $G=\Z_5\times \Z_5$ and $a=b=2$. Then we get
    $$R=\{(2,0),(2,1),(2,3),(2,4),(0,2),(1,2),(3,2),(4,2)\}$$ which is an $(5,5,8,3,3,2)$-DPDS in $G$ relative to $\Z_5$.
\end{example}

We now draw the connection between the definition of $\ell$-PDPDS and the perfect sequences. Let $m$ be a prime number, $n \geq 2$ be an integer, and $\underline{a} = (a_0,a_1,\ldots,a_{n},\ldots)$ be an almost $m$-ary sequence of period $n+s$ with $s$ zero-symbol such that $a_{i_j} = 0$ for all $j=1,2,\ldots , s$ where $\{ i_1,i_2, \ldots , i_s \} \subset \{0,1, \ldots , n+s-1\}$.
	Let $H = \langle h \rangle$ and $P = \langle g \rangle $ be the (multiplicatively written) cyclic groups of order $n+s$ and $m$, respectively. Let $G$ be the group defined as $G=H \times P$.
	We choose a primitive $m$-th root of unity, $\zeta_m \in \C$. For $i \in  \{0,1, \ldots , n+s-1\} \backslash \{ i_1,i_2, \ldots , i_s \}$ let $b_i$ be the integer in $\{0,1,2,\ldots,m-1\}$ such that $a_i = \zeta_m^{b_i}$. 
	Let $R_a$ be a subset of $G$ defined as
	
	\be \label{eqn:NPS2DPDS}
	R_a= \{(g^{b_i}h^i) \in G : i \in  \{0,1, \ldots , n+s-1\} \backslash \{ i_1,i_2, \ldots , i_s \}\}.
	\ee
From now on, we will use the notation $R_a$ for a difference set if it is related with a sequence. On the other hand, we will use $R$ if we are talking about a general difference set.

The authors in \cite{ozdenyayla2020} studied almost $m$-ary NPS $\underline{a}$ with two consecutive zero-symbols and obtained that $R_a$ is an PDPDS. Then, some non-existence results for NPS with two consecutive zero-symbols were proven by using their PDPDS equivalence. 
Here, we extend this one-to-one equivalence to the non-consecutive case and show that  an NPS with two zero-symbols is related to an $\ell$-PDPDS, see the consecutive zero result in \cite[Theorem 3]{ozdenyayla2020}. As its proof is very similar to the consecutive case, we do not give its proof here.
\begin{proposition}
 \label{th:l_PDPDS}
	Let $\gamma_1,\gamma_2$ be integers. Then
	$\underline{a}$ is an almost $m$-ary NPS of type $(\gamma_1, \gamma_2)$ with two zero-symbols at locations 0 and $\ell$ if and only if $R_a$ is an $\ell$-$(n+2,m,n,\frac{n-\gamma_2 - 2}{m}+\gamma_2,0,\frac{n-\gamma_1 -1}{m}+\gamma_1,\frac{n-\gamma_2 - 2}{m},\frac{n-\gamma_1 -1}{m})$-PDPDS.
\end{proposition}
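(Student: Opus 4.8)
The plan is to set up the standard dictionary between the group-ring square $R_aR_a^{(-1)}\in\Z[G]$ and the autocorrelation values of $\underline a$, and then simply read off the five conditions of Definition~\ref{df:l-pDPDS}. Write $\mathcal{I}=\{0,1,\dots,n+1\}\setminus\{0,\ell\}$ for the set of positions carrying a non-zero symbol, so $|\mathcal{I}|=n$ and, the powers $h^i$ ($i\in\mathcal{I}$) being pairwise distinct, $|R_a|=n=k$. Expanding the product defined via \eqref{eqn:NPS2DPDS},
\[
R_aR_a^{(-1)}=\sum_{t=0}^{n+1}\ \sum_{j=0}^{m-1}c_{t,j}\,h^tg^j,\qquad
c_{t,j}=\#\{(i,i')\in\mathcal{I}^{2}:\ i-i'\equiv t\pmod{n+2},\ b_i-b_{i'}\equiv j\pmod{m}\}.
\]
Here $c_{t,0}$ counts the representations of $h^t$ lying in $H$, $c_{0,j}$ counts those of $g^j$ lying in $P$, and $c_{t,j}$ with $t\ne 0$, $j\ne 0$ counts those of $h^tg^j$, so the whole statement reduces to pinning down the integers $c_{t,j}$.

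The link between the two sides is the identity $\sum_{j=0}^{m-1}c_{t,j}\zeta_m^{\,j}=C_{\underline a}(-t)$, obtained by the substitution $i'=i-t$ in the definition of $c_{t,j}$, using $a_i=\zeta_m^{b_i}$ on $\mathcal{I}$ and $a_i=0$ off it; since $\gamma_1,\gamma_2\in\Z$ every out-of-phase value is real, so $C_{\underline a}(-t)=\overline{C_{\underline a}(t)}=C_{\underline a}(t)$ and the sign on the index is harmless. The second ingredient is the count
\[
\sum_{j=0}^{m-1}c_{t,j}=\bigl|\mathcal{I}\cap(\mathcal{I}-t)\bigr|=(n+2)-\bigl|\{0,\ \ell,\ -t,\ \ell-t\}\bigr|,
\]
which equals $n$ for $t=0$, equals $n-1$ for $t\in\{\ell,\,n+2-\ell\}$ (the four listed residues collapsing to three), and equals $n-2$ for every other $t$ (the four being pairwise distinct); the degenerate case $2\ell\equiv 0\pmod{n+2}$, in which $h^\ell=h^{n+2-\ell}$, is set aside. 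This splitting of the shift $t$ into three regimes is the only place where having two zero-symbols rather than one is felt, and it does not depend on whether $\ell=1$ (the consecutive case of \cite[Theorem~3]{ozdenyayla2020}) or $\ell>1$.

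For the forward implication, assume $\underline a$ is an almost $m$-ary NPS of type $(\gamma_1,\gamma_2)$ with its two zeros at $0$ and $\ell$, with $C_{\underline a}(t)=\gamma_1$ for $t\in\{\ell,n+2-\ell\}$ and $C_{\underline a}(t)=\gamma_2$ for the remaining out-of-phase shifts. Fix $t\ne 0$; then $\sum_j c_{t,j}\zeta_m^{\,j}\in\Z$, and since $m$ is prime the minimal polynomial of $\zeta_m$ over $\Q$ is $1+x+\dots+x^{m-1}$, so $\sum_j c_{t,j}x^{j}$ minus that integer is a scalar multiple of $1+x+\dots+x^{m-1}$. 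This forces $c_{t,1}=\dots=c_{t,m-1}=:\mu$ and $c_{t,0}=\mu+C_{\underline a}(t)$. Combining with the support-size count: for a generic $t$, $m\mu+\gamma_2=n-2$, so $\mu=\mu_1=\frac{n-\gamma_2-2}{m}$ and $c_{t,0}=\lambda_1=\mu_1+\gamma_2$; for $t\in\{\ell,n+2-\ell\}$, $m\mu+\gamma_1=n-1$, so $\mu=\mu_2=\frac{n-\gamma_1-1}{m}$ and $c_{t,0}=\lambda_3=\mu_2+\gamma_1$; and $c_{0,j}=0$ for $j\ne 0$ trivially ($i-i'\equiv 0$ forces $i=i'$, hence $b_i=b_{i'}$), giving $\lambda_2=0$. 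These are exactly the five bulleted conditions of Definition~\ref{df:l-pDPDS} with $(n,m,k)$ there replaced by $(n+2,m,n)$ here, equivalently the coefficient-by-coefficient identity \eqref{eqn:l-PDPDS}, so $R_a$ is the claimed $\ell$-PDPDS. The converse runs the same computation backwards: the PDPDS conditions determine every $c_{t,j}$, whence $C_{\underline a}(-t)=\sum_j c_{t,j}\zeta_m^{\,j}$ equals $\lambda_1-\mu_1=\gamma_2$ for generic out-of-phase $t$ and $\lambda_3-\mu_2=\gamma_1$ for $t\in\{\ell,n+2-\ell\}$, so $\underline a$ is an almost $m$-ary NPS of the asserted type with zeros at $0$ and $\ell$.

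The only genuine work is the bookkeeping of the three shift-regimes — in particular verifying that the forbidden residue set $\{0,\ell,-t,\ell-t\}$ drops from four elements to three exactly when $t\in\{\ell,n+2-\ell\}$ — together with the elementary cyclotomic-integrality step, which is precisely where the primality of $m$ enters. Since none of this is structurally different from the two-consecutive-zero argument, I would adapt the proof of \cite[Theorem~3]{ozdenyayla2020} essentially verbatim, substituting the shift $\ell$ for the shift $1$ throughout.
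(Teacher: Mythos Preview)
Your proposal is correct and matches the paper's own treatment: the paper does not actually give a proof of Proposition~\ref{th:l_PDPDS}, stating only that it follows the consecutive-zero argument of \cite[Theorem~3]{ozdenyayla2020} verbatim, and your write-up does precisely that --- setting up the group-ring/autocorrelation dictionary, using the primality of $m$ to force $c_{t,1}=\cdots=c_{t,m-1}$, and carrying out the three shift-regime bookkeeping with $\ell$ in place of $1$. Your explicit flagging of the degenerate case $2\ell\equiv 0\pmod{n+2}$ is a small addition the paper does not mention.
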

\begin{remark}
	According to Proposition \ref{th:l_PDPDS}, we see that an $m$-ary NPS of type $(\gamma_1, \gamma_2)$ with two zero-symbols is equivalently defined by an $\ell$-PDPDS. Herein, we note that 
	\be \nn
	C_{\underline{a}}(t) = \left\lbrace \begin{array}{ll} \gamma_1& \mbox{if } \mbox{$t= \ell, n- \ell$},\\ \gamma_2 & \mbox{if } \mbox{ otherwise}. \end{array} \right.
	\ee
\end{remark}
We give below some examples of Proposition \ref{th:l_PDPDS} for sequences with nonconsecutive zero-symbols.
\begin{example}
	The sequence $\underline{a}=(0,\zeta_3,\zeta_3,\zeta_3,0,\zeta_3,\zeta_3^2,\zeta_3^2,\zeta_3,\ldots)$  is an almost ternary NPS of type $(0,2)$ with two zero-symbols and $R_a$ is a 
	4-(9,3,7,3,0,2,1,2) PDPDS in $\Z_9\times\Z_3$.
Similarly, 
	the sequence $\underline{a}=(0,1,1,0,\zeta_2,1,\zeta_2,\zeta_2,1,\zeta_2,\ldots)$  is an almost binary NPS of type $(3,-2)$ with two zero-symbols and $R_a$ is a 
	3-(10,2,8,4,0,5,4,2) PDPDS in $\Z_{10}\times\Z_2$.
\end{example}
By using Definition \ref{df:l-pDPDS}, we present  some results on  the parameters of an $\ell$-PDPDS,  which we give in Proposition \ref{prop:s_i}. We note that this is a direct extension of  \cite[Proposition 4]{ozdenyayla2020}, and so its proof follows similarly. 

\begin{proposition}\label{prop:s_i}
Let $ R $ be an $\ell$-$(n,m,k,\lambda_1,\lambda_2,\lambda_3,\mu_1,\mu_2)$-PDPDS in $G$ relative to $H$ and $P$. Let $s_i \in \Z^+\cup \{0\}$ be defined as $s_i = |\{(r_1,r_2) \in R : r_2=i\}|$. 
		Then 
		\begin{itemize}
		    \item[(i)] $ 
		\sum_{j=0}^{m-1}{s_j}^2=(n-3)\lambda_1+2\lambda_3+k
		$
		\item[(ii)]  $ \sum_{j=0}^{m-1}s_j s_{j-i}=(n-3)\mu_1+\lambda_2+2\mu_2
		$
		\item[(iii)] $(m-1)\sum_{j=0}^{m-1}s_j s_{j-i}+\sum_{j=0}^{m-1}{s_j}^2=k^2$
		\end{itemize}
		for each $ i=1,2,\dots, \lceil\frac{m-1}{2}\rceil$, where the subscripts are computed modulo $ m $. 
\end{proposition}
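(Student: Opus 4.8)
The plan is to push the group‑ring identity \eqref{eqn:l-PDPDS} forward along the natural projection onto the $P$‑factor, exactly as in the proof of \cite[Proposition 4]{ozdenyayla2020}. Let $\rho\colon \Z[G]\to\Z[P]$ be the ring homomorphism induced by $G=H\times P\to P$, $h\mapsto e_P$, $g\mapsto g$. By the definition of $s_i$ we have $\rho(R)=\sum_{j=0}^{m-1}s_j\,g^{j}$, and since $\rho$ intertwines the anti‑automorphism $x\mapsto x^{(-1)}$ on both sides,
\[
\rho\bigl(RR^{(-1)}\bigr)=\rho(R)\,\rho(R)^{(-1)}=\sum_{i=0}^{m-1}\Bigl(\sum_{j=0}^{m-1}s_j\,s_{j-i}\Bigr)g^{i},
\]
where the subscripts are read modulo $m$. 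Thus the coefficient of $g^0=e_P$ in $\rho(RR^{(-1)})$ is $\sum_j s_j^2$ and, for $i\neq 0$, the coefficient of $g^i$ is $\sum_j s_j s_{j-i}$.

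Next I would compute $\rho$ on the right‑hand side of \eqref{eqn:l-PDPDS}. One has $\rho(H)=n\,e_P$, $\rho(P)=P:=\sum_{j=0}^{m-1}g^j$, $\rho(G)=\rho(H)\rho(P)=nP$, $\rho(\{h^\ell,h^{n-\ell}\})=2e_P$, and $\rho(\{h^\ell,h^{n-\ell}\}\times\{g,g^2,\dots,g^{m-1}\})=2(P-e_P)$. Substituting and collecting like terms, the image of the right‑hand side becomes $\alpha\,e_P+\beta\,P$ with $\beta=(n-3)\mu_1+\lambda_2+2\mu_2$ and $\alpha+\beta=(n-3)\lambda_1+2\lambda_3+k$; here the coefficient of $e_P$ absorbs the constant term of \eqref{eqn:l-PDPDS} together with the contributions of $(\lambda_1-\mu_1)\rho(H)$, of $(\lambda_3-\lambda_1)\rho(\{h^\ell,h^{n-\ell}\})$, and of the $-e_P$ part of the last summand. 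Keeping track of these overlapping contributions (and their signs) is the only delicate point of the argument; everything else is a formal comparison. Since $\rho(R)\rho(R)^{(-1)}=\alpha e_P+\beta P$ in $\Z[P]$, equating the coefficient of $e_P$ yields $\sum_j s_j^2=\alpha+\beta=(n-3)\lambda_1+2\lambda_3+k$, which is (i), and equating the coefficient of $g^i$ for any fixed $i\neq 0$ yields $\sum_j s_j s_{j-i}=\beta=(n-3)\mu_1+\lambda_2+2\mu_2$, which is (ii); the stated range $1\le i\le\lceil(m-1)/2\rceil$ suffices because the substitution $j\mapsto j+i$ gives $\sum_j s_j s_{j-i}=\sum_j s_j s_{j-(m-i)}$.

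Finally, for (iii) I would apply the augmentation map $\varepsilon\colon\Z[P]\to\Z$ (sum of all coefficients) to $\rho(R)\rho(R)^{(-1)}$. On one hand, $\varepsilon(\rho(R)\rho(R)^{(-1)})=\varepsilon(\rho(R))^2=\bigl(\sum_j s_j\bigr)^2=|R|^2=k^2$. On the other hand, summing the coefficients found above, $\varepsilon(\rho(R)\rho(R)^{(-1)})=\sum_j s_j^2+\sum_{i\neq 0}\sum_j s_j s_{j-i}=\sum_j s_j^2+(m-1)\sum_j s_j s_{j-i}$, the last step using that by (ii) the inner sum is independent of $i\neq 0$. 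Equating the two expressions gives (iii). (One can also note that (iii) is purely formal: it follows by substituting (i) and (ii) and using the parameter identity $\varepsilon(RR^{(-1)})=k^2$.) As anticipated, the only real work is the coefficient bookkeeping in the middle step; the rest is a routine transfer of the group‑ring identity to $\Z[P]$.
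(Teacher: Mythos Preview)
Your proof is correct and follows essentially the same approach the paper intends: the paper does not give a proof but states that it ``follows similarly'' to \cite[Proposition~4]{ozdenyayla2020}, and your projection of the group-ring identity \eqref{eqn:l-PDPDS} onto $\Z[P]$ is precisely that argument adapted to the extra $\ell$-terms. The coefficient bookkeeping you flag as ``the only delicate point'' checks out: the $e_P$-coefficient simplifies to $(n-3)\lambda_1+2\lambda_3+k$ and the $g^i$-coefficient ($i\neq 0$) to $(n-3)\mu_1+\lambda_2+2\mu_2$, exactly as you claim.
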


\begin{example}
    Let $R=\{(5,0),(5,1),(5,2),(5,4),(5,5),(5,6),(0,3),(1,3),(2,3),(4,3),(5,3),(6,3)\}$ where $s_i=1$ for $i\in \{0,1,2,4,5,6\}$ and $s_3=6$. Then  $R$ is an 2-(7,7,12,5,7,5,2,1) PDPDS in $G$ relative to $\Z_7$. Note that  $R$ satisfies (i)-(ii)-(iii) in Proposition \ref{prop:s_i}. 
    
\end{example}
Next, by using Proposition \ref{th:l_PDPDS} and Proposition \ref{prop:s_i} we get in Proposition \ref{prop:non} a non-existence result of an $m$-ary NPS of type $(\gamma_1,\gamma_2)$ with two zero-symbols for $\gamma_2 \le -3$, see \cite[Theorem 4]{ozdenyayla2020}.
\begin{proposition} \label{prop:non}
Let $\gamma_1,\gamma_2$ be integers.
Let $ m $ be an odd prime, 
		let $n-\gamma_2 - 2 = k_1m$ and $n-\gamma_1-1 =k_2m$ for some $k_1,k_2 \in \N$. Then, there does not exist 
		an almost $ m $-ary sequence of  type $ (\gamma_1,\gamma_2) $ and period $ n+2 $ with two zero-symbols 
		for $\gamma_2 \le \left\lfloor\frac{-mk_1-4+ \sqrt{m^2k_1^2-4mk_1+8mk_2}}{2}\right\rfloor$, and so 
		for  $\gamma_2 \leq -3$.
\end{proposition}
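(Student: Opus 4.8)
The plan is to argue by contradiction: turn a hypothetical such sequence into an $\ell$-PDPDS via Proposition~\ref{th:l_PDPDS}, feed its parameters into the counting identities of Proposition~\ref{prop:s_i}, and close with a Cauchy--Schwarz estimate that rules out the small values of $\gamma_2$. Assume an almost $m$-ary sequence $\underline a$ of type $(\gamma_1,\gamma_2)$ and period $n+2$ with two zero-symbols, say at locations $0$ and $\ell$, exists. By Proposition~\ref{th:l_PDPDS} and the hypotheses $n-\gamma_2-2=k_1m$, $n-\gamma_1-1=k_2m$, the set $R_a$ is an $\ell$-$(n+2,m,n,k_1+\gamma_2,0,k_2+\gamma_1,k_1,k_2)$-PDPDS in $H\times P$ with $|H|=n+2$ and $|P|=m$. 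Since $\lambda_1=k_1+\gamma_2$ counts difference representations, it is nonnegative, which also records the inequality $n\ge k_1(m-1)+2$; this will pin down one endpoint of a quadratic later.

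Applying Proposition~\ref{prop:s_i} to $R_a$ (reading its ``$n$'' as $n+2$ and its ``$k$'' as $n$), write $s_0,\dots,s_{m-1}$ for the integers defined there, so that $\sum_{j=0}^{m-1}s_j=|R_a|=n$ and hence, by Cauchy--Schwarz, $\sum_{j=0}^{m-1}s_j^2\ge n^2/m$. Eliminating $\gamma_1,\gamma_2$ from identity~(i) of Proposition~\ref{prop:s_i} using $\gamma_2=n-2-k_1m$ and $\gamma_1=n-1-k_2m$ gives, after a short simplification,
\[
\sum_{j=0}^{m-1}s_j^2=n^2-(m-1)\bigl[(n-1)k_1+2k_2\bigr],
\]
so the bound above reduces (dividing by $m-1>0$) to $n^2/m\ge(n-1)k_1+2k_2$, that is,
\[
n^2-mk_1n+mk_1-2mk_2\ge 0 \qquad (\star).
\]
(Identities~(ii) and~(iii) give the same inequality, since $(m-1)\sum_j s_js_{j-i}+\sum_j s_j^2=n^2$.)

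Next I would determine when $(\star)$ fails. As a quadratic in $n$ it is violated exactly for $n$ strictly between its roots $n_\pm=\tfrac12\bigl(mk_1\pm\sqrt{m^2k_1^2-4mk_1+8mk_2}\bigr)$. Using $n\ge k_1(m-1)+2$ together with $m\ge 3$, one checks $n>n_-$ always (this reduces to $k_1(m-2)+4+\sqrt{m^2k_1^2-4mk_1+8mk_2}>0$); hence $(\star)$ forces $n\ge n_+$. Substituting $n=k_1m+\gamma_2+2$, the latter is exactly $\gamma_2\ge\tfrac12\bigl(-mk_1-4+\sqrt{m^2k_1^2-4mk_1+8mk_2}\bigr)$. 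Contrapositively, whenever $\gamma_2$ is strictly below this number --- equivalently, by integrality, $\gamma_2\le\bigl\lfloor\tfrac12(-mk_1-4+\sqrt{m^2k_1^2-4mk_1+8mk_2})\bigr\rfloor$ --- no such sequence exists, which is the asserted bound. The one delicate point is the boundary case in which $\tfrac12(-mk_1-4+\sqrt{m^2k_1^2-4mk_1+8mk_2})$ is itself an integer: then $(\star)$ holds with equality, forcing equality in Cauchy--Schwarz, so all $s_j$ are equal and $m\mid n$, and this case is handled exactly as in the consecutive-zero analogue \cite[Theorem~4]{ozdenyayla2020}.

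Finally, for the consequence $\gamma_2\le -3$: if $k_1\le 2$, then $\lambda_1=k_1+\gamma_2\le -1<0$, already contradicting that $\lambda_1$ is a count; and if $k_1\ge 3$, a direct estimate gives $\sqrt{m^2k_1^2-4mk_1+8mk_2}\ge mk_1-2$ (both sides are nonnegative and, after squaring, this is just $8mk_2\ge 4$), so $\tfrac12(-mk_1-4+\sqrt{m^2k_1^2-4mk_1+8mk_2})\ge -3$ and every $\gamma_2\le -3$ lies in the non-existence range just found. I expect the real work to be in the third paragraph --- checking $n>n_-$ from $\lambda_1\ge 0$, and the endpoint estimate for $-3$ --- while the elimination of $\gamma_1,\gamma_2$ that produces $(\star)$ and the case analysis in the last step are routine.
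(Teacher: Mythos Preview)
Your proposal is correct and follows precisely the route the paper indicates: the paper gives no argument of its own here, only the sentence ``by using Proposition~\ref{th:l_PDPDS} and Proposition~\ref{prop:s_i} we get \ldots, see \cite[Theorem~4]{ozdenyayla2020},'' and your proof is exactly the natural expansion of that hint---pass to the $\ell$-PDPDS, evaluate identity~(i) as $\sum_j s_j^2=n^2-(m-1)[(n-1)k_1+2k_2]$, and bound it below by $n^2/m$ via Cauchy--Schwarz to obtain the quadratic inequality~$(\star)$. Your handling of $n>n_-$ from $\lambda_1\ge 0$ and the $\gamma_2\le -3$ endpoint (splitting on $k_1\le 2$ versus $k_1\ge 3$, using $k_2\ge 1$) is clean; the only residual looseness is the equality/boundary case, which you rightly flag and defer to \cite[Theorem~4]{ozdenyayla2020}, just as the paper itself does.
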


We now consider the multipliers of the set $R_a$.
We give first the definition of multiplier of a general difference set. Let $ R $ be an $\ell$-PDPDS in $ G $ relative to $ H $ and $ P $.  The set $R^{(t)}$ is defined by $$R^{(t)}=\{tr:r\in R\}\subset G $$  for $ t \in \Z $. If there exist $ g\in G $ such that $ R^{(t)}=R+g\subset G $ for some $t \in \Z$ such that $ gcd(t,|G|)=1$,  then we call that $ t $ is a \textit{multiplier} of $ R $.
	The following result gives us a necessary condition on  the multipliers of the set  $R_a$.
\begin{proposition}\label{prop:multiplier}
Let  $R_a \subset G$ be defined as in $\eqref{eqn:NPS2DPDS}$ for $s=2$ such that $i_1=0$ and $i_2=\ell$. 
If $ t $ is a multiplier of $ R_a $, then $ t \equiv \pm 1 \mod (n+2) $.
\end{proposition}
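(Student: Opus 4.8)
The plan is to exploit the fact that a multiplier $t$ must preserve the ``shape'' of $R_a$, and in particular must act on the $H$-coordinates of the two missing positions $\{0,\ell\}$ in a structured way. Write $G = H\times P$ with $H=\langle h\rangle$ of order $n+2$ and $P=\langle g\rangle$ of order $m$, and recall that $R_a = \{g^{b_i}h^i : i\in\{0,1,\dots,n+1\}\setminus\{0,\ell\}\}$ has exactly one element in each ``column'' $h^i$ for $i\notin\{0,\ell\}$ and no element in the columns $h^0$ and $h^\ell$. First I would observe that the projection $\pi_H\colon G\to H$ carries $R_a$ onto the multiset in which each of $h^1,\dots,h^{\ell-1},h^{\ell+1},\dots,h^{n+1}$ occurs once and $h^0,h^\ell$ occur zero times; equivalently, in $\Z[H]$ we have $\pi_H(R_a) = H - e_H - h^\ell$. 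Since $\gcd(t,|G|)=1$ forces $\gcd(t,n+2)=1$, the map $h\mapsto h^t$ is an automorphism of $H$, and $\pi_H(R_a^{(t)}) = \pi_H(R_a)^{(t)} = H - e_H - h^{t\ell}$.

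Next I would use the multiplier hypothesis $R_a^{(t)} = R_a + (h^c g^d)$ for some $c,d$. Projecting this identity to $H$ gives $H - e_H - h^{t\ell} = h^c\cdot(H - e_H - h^\ell) = H - h^c - h^{c+\ell}$ in $\Z[H]$. Comparing the two ``missing'' terms, the unordered pair $\{e_H, h^{t\ell}\}$ must equal the unordered pair $\{h^c, h^{c+\ell}\}$. One of these two equations must read $h^c = e_H$, i.e. $c\equiv 0\pmod{n+2}$ — because if instead $h^{c+\ell}=e_H$ then $c\equiv -\ell$ and then $h^c = h^{-\ell}$, which cannot equal $e_H$ since $\ell\in H\setminus\{e_H\}$ means $\ell\not\equiv 0$, but we still need $\{e_H,h^{t\ell}\} = \{h^{-\ell}, e_H\}$, forcing $h^{t\ell}=h^{-\ell}$. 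In the first case ($c\equiv 0$) we similarly get $h^{t\ell} = h^\ell$, i.e. $t\ell\equiv \ell\pmod{n+2}$; in the second case $t\ell \equiv -\ell \pmod{n+2}$.

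So far this only gives $t\ell \equiv \pm\ell$, not $t\equiv\pm1$; closing that gap is the main obstacle, and it is where the autocorrelation structure (not merely the position pattern) must enter. Here I would bring in more columns. The point is that the shift $g^d$ in the first coordinate is a single global constant, so for the automorphism $i\mapsto ti$ of $\Z_{n+2}$ (restricted to the columns $i\neq 0,\ell$) we need $b_{ti} \equiv b_i + d \pmod m$ for every admissible $i$, together with the requirement that $ti \not\equiv 0,\ell$ whenever $i\not\equiv 0,\ell$ — i.e. multiplication by $t$ fixes the set $\{0,\ell\}$ setwise. Combined with the constraint from the previous paragraph, fixing $\{0,\ell\}$ setwise means either $t\cdot 0 = 0, t\ell\equiv\ell$ or $t\cdot0=0, t\ell\equiv\ell$ (the swap $0\leftrightarrow\ell$ is impossible since $t\cdot0=0$ always). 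Thus $t\ell\equiv\ell\pmod{n+2}$, i.e. $(t-1)\ell\equiv0$. To upgrade this to $t\equiv\pm1$ I would invoke Proposition~\ref{prop:multiplier}'s analogue of the standard multiplier argument for difference sets: apply the group-ring identity \eqref{eqn:l-PDPDS} for $R_aR_a^{(-1)}$ together with $R_a^{(t)} = R_a + h^cg^d$ to deduce that $t$ also acts as a multiplier ``compatibly'' on the companion honest difference set structure in $P$, and then use that a cyclic difference set in a group of order $n+2$ (coming from the full sequence support when the two zeros are re-inserted, as in the consecutive-case analysis of \cite{ozdenyayla2020}) admits only the trivial multipliers $\pm1$ modulo $n+2$. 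The cleanest route, which I would try first, is: the reversal $\underline{a}\mapsto\overline{\underline{a}}$ read backwards corresponds to $t=-1$, and any multiplier composed with this reversal fixes both $0$ and $\ell$ as well as the cyclic difference pattern; a short counting argument on the number of columns $i$ with $b_{ti}=b_i+d$ then forces $t\equiv1$. I expect the delicate point to be handling small $m$ and the possibility that the two zero positions are ``symmetric'' ($2\ell\equiv0$), which may require a separate direct check.
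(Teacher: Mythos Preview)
Your projection-to-$H$ argument is exactly the paper's approach: look at the first coordinates, observe that the two missing columns $\{0,\ell\}$ of $R_a$ must be sent to the two missing columns of $R_a+g$, and read off the possible values of $t$ and of the $H$-component $c$ of $g$. The paper's proof in fact stops precisely at the point where you write ``so far this only gives $t\ell\equiv\pm\ell$'': from $\{0,t\ell\}=\{0,\pm\ell\}$ in $\Z_{n+2}$ it simply asserts $t\equiv\pm1\pmod{n+2}$, tacitly cancelling $\ell$ without addressing the case $\gcd(\ell,n+2)>1$. So you have correctly spotted a gap that the paper's own argument glosses over.

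Your attempts to close that gap, however, do not go through---and indeed cannot, because the statement as written (for an arbitrary $R_a$ of the form \eqref{eqn:NPS2DPDS}, with no NPS hypothesis) is false when $\gcd(\ell,n+2)>1$. Concretely, take $n+2=9$, $m=2$, $\ell=3$ and $(b_1,b_2,b_4,b_5,b_6,b_7,b_8)=(0,1,0,1,0,0,1)$, so that $R_a=\{(1,0),(2,1),(4,0),(5,1),(6,0),(7,0),(8,1)\}\subset\Z_9\times\Z_2$; one checks directly that $R_a^{(7)}=R_a$, so $t=7$ is a multiplier, yet $7\not\equiv\pm1\pmod 9$. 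Two specific places where your repair attempt slips: first, it is the \emph{affine} map $i\mapsto ti-c$, not $i\mapsto ti$, that must fix $\{0,\ell\}$ setwise, so the case $c\equiv-\ell$, $t\ell\equiv-\ell$ is not excluded by the observation $t\cdot 0=0$; second, even once both cases $(t\mp1)\ell\equiv0$ are on the table, the appeals to a ``standard multiplier argument'' and a ``short counting argument'' are not specified, and the counterexample above shows that no argument using only the column pattern can finish. To rescue the proposition one needs either the extra hypothesis $\gcd(\ell,n+2)=1$, or genuine use of the PDPDS/NPS structure of $R_a$ via \eqref{eqn:l-PDPDS}.
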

\begin{proof}
Let $ R_a=\{(1,b_1),(2,b_2),\dots,(\ell-1,b_{\ell-1}),(\ell+1,b_{\ell+1}), \ldots,(n+1,b_{n+1})\} $ and $ R_1 $ denote the set of first components of the elements in $ R_a $, i.e. $ R_1=\{1,2,\dots,\ell-1,\ell+1,\ldots,n+1\} $. Similarly, we define $R^{(t)}_1 $ and $G_1$  as $ R^{(t)}_1=\{tr_1: r_1 \in R_1\}$  and $G_1=\{0,1,\ldots,n+1\}$, respectively. We suppose that $ t $ is a multiplier of $ R_a $. Thus, there exists $g_1 \in G_1$ such that $ R^{(t)}_1 = R_1+g_1$. Assume that $ g_1\in G_1\backslash\{0,n+2-\ell\}$, then  there exists $r_1 \in R_1$ such that $ r_1+g_1=0 $. However, $0 \not\in R^{(t)}_1 $ as $ \gcd(t,|G|)=1 $. So, $ g_1  \in \{0,n+2-\ell\} $. 
Firstly, if $ g_1=n+2-\ell $, then
\be \label{eq:mult_g}
R_1+g_1=\Z_{n+2}\backslash\{0,-\ell\}.
\ee 
On the other hand, as $ \gcd(t,|G|)=1 $ and  $R_1=\Z_{n+2}\backslash\{0,\ell\}$, we have 
\be \label{eq:mult_t}
R^{(t)}_1=\Z_{n+2}\backslash\{0,t\ell\}
\ee
By \eqref{eq:mult_g} and \eqref{eq:mult_t}, we get $ t  \equiv -1 \mod (n+2) $. If $ g_1=0 $, then we get $ R_1+g_1=\{1,2,\dots,\ell-1,\ell+1,\ldots,n+1\} $. It is now clear that $ t \equiv 1 \mod (n+2) $.
\end{proof}
 \\
 
In the remaining part of this section, we consider the case $\ell=1$.
\begin{theorem} \label{thm:sym}
		Let $m$ be an odd prime, $R_a \in G=H \times P$ be defined as in \eqref{eqn:NPS2DPDS} for $s=2$ and $\ell = 1$. If $R_a$ is a $1$-$(n,m,k,\lambda_1,\lambda_2,\lambda_3,\mu_1,\mu_2)$-PDPDS in $G$ relative to $H$ and $P$, then $b_i=b_{n+3-i}$.
		Moreover, if $n$ is an odd (even) integer, then $\lambda_1$ is a positive odd (resp. even) integer. 

\end{theorem}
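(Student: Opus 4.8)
The plan is to establish the symmetry first, and then to read off the parity of $\lambda_1$ as an easy by‑product.

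\emph{Step 1 (reduction to a translate).} Let $z_1,z_2$ be the two zero positions and let $\iota\colon G\to G$ be the involutive automorphism inverting $H$ and fixing $P$ pointwise. If $a'$ denotes the reversed sequence $a'_i=\zeta_m^{\,b_{-i}}$, then a direct computation gives $\iota(R_a)=R_{a'}$. Since translating $R_a$ by an element of $H$ is the same as cyclically shifting the underlying sequence, the conclusion $b_i=b_{n+3-i}$ is equivalent to $R_{a'}=h^{-c}R_a$ with $c=z_1+z_2$; with the normalization of the statement $c\equiv n+3\pmod n$, the shift being forced by matching the zero positions exactly as in the proof of Proposition~\ref{prop:multiplier}. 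Now $\iota$ fixes $H$, $P$, $G$ setwise, swaps the two elements of $\{h^{\ell},h^{n-\ell}\}$, and fixes $\{h^{\ell},h^{n-\ell}\}\times\{g,\dots,g^{m-1}\}$; as it also commutes with the $(-1)$‑map, applying $\iota$ to \eqref{eqn:l-PDPDS} yields $\iota(R_a)\iota(R_a)^{(-1)}=R_aR_a^{(-1)}$. Hence $R_{a'}$ is again a $1$‑PDPDS with exactly the parameters of $R_a$; equivalently, by Proposition~\ref{th:l_PDPDS}, $a'$ is an almost $m$‑ary NPS of the same type $(\gamma_1,\gamma_2)$ with two zero‑symbols at consecutive positions.

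\emph{Step 2 (the core: $R_{a'}$ is a translate of $R_a$).} Write $R_a=\sum_{j=0}^{m-1}A_jg^j$ with $A_j=\sum_{i:\,b_i=j}h^i\in\Z[H]$; the $A_j$ are pairwise disjoint $\{0,1\}$‑subsets of $H$ with $\sum_j A_j=H-h^{z_1}-h^{z_2}$, and $R_{a'}=h^{-c}R_a$ is precisely the assertion that $A_j^{(-1)}=h^{-c}A_j$ for every $j$. To obtain this I would fix a non‑principal character $\psi$ of $H$ with $\psi(h)=\omega$ and a non‑principal character $\chi$ of $P$, and apply $\psi\otimes\chi$ to \eqref{eqn:l-PDPDS}; since $\psi(H)=\chi(P)=0$ the right‑hand side collapses and, using $\ell=1$, gives $|(\psi\otimes\chi)(R_a)|^2=(k-\gamma_2)+(\gamma_1-\gamma_2)(\omega+\omega^{-1})$, a value not depending on $\chi$. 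Putting $v_j=\psi(A_j)\in\Z[\omega]$ and $V(z)=\sum_j v_jz^j$, this says $|V(\zeta)|^2$ is the same for every non‑principal $m$‑th root of unity $\zeta$, while $|V(1)|^2=|\omega^{z_1}+\omega^{z_2}|^2=2+\omega+\omega^{-1}$ (here using that the two zeros are consecutive). The goal is then to combine this with the primality of $m$ — so that the only $\Q$‑linear relation among the $m$‑th roots of unity is $\sum_j\zeta_m^j=0$ — and with the $\{0,1\}$‑shape of the coefficients of the $A_j$ to force $\overline{v_j}=\omega^{-c}v_j$ for every $j$ and every such $\psi$; as this relation also holds for the principal character of $H$, Fourier inversion on $H$ then gives $A_j^{(-1)}=h^{-c}A_j$, hence $b_i=b_{n+3-i}$. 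I expect this step to be the main obstacle: the character identities only pin down the (two‑valued) ``autocorrelation'' of the tuple $(v_j)$, so the delicate point is to exploit the $\{0,1\}$‑structure of the $A_j$ to force each single $A_j$ to be palindromic. A parallel, more combinatorial route is to note that $C_{\underline{a}}(t)\in\Z$ forces, by a vanishing‑sums‑of‑roots‑of‑unity argument, the multiset $\{\,b_i-b_{i+t}:a_i\neq0,\ a_{i+t}\neq0\,\}$ to contain every non‑zero residue modulo $m$ equally often, and then to chain these constraints across the values of $t$ (as one does by hand when $n$ is small); the obstacle there is to make the chaining uniform in $n$.

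\emph{Step 3 (parity of $\lambda_1$).} This needs only the equivalence itself. By the parameter formula of Proposition~\ref{th:l_PDPDS}, $\lambda_1=\tfrac{k-\gamma_2-2}{m}+\gamma_2$ with $\tfrac{k-\gamma_2-2}{m}\in\Z$; since $m$ is odd, dividing an integer by $m$ preserves its parity, so $\lambda_1\equiv(k-\gamma_2-2)+\gamma_2=k-2\equiv k\equiv n\pmod 2$ (recall $k=n-2$). Thus $\lambda_1$ is odd when $n$ is odd and even when $n$ is even. For positivity: if $\lambda_1=0$, the identity $\sum_j s_j^2=(n-3)\lambda_1+2\lambda_3+k$ of Proposition~\ref{prop:s_i}(i), together with $\sum_j s_j^2\ge k^2/m$ and $\lambda_3\le n-3$, bounds $n$ from above, and the finitely many remaining cases are ruled out directly (or by Proposition~\ref{prop:non}).
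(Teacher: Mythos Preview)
Your Step~2 is where the argument stalls, and the gap is real: you correctly isolate the combinatorial content --- for each shift $t$, the multiset $\{\,b_{i+t}-b_i:\text{both nonzero}\,\}$ hits every nonzero residue mod $m$ the same number of times --- but you then treat the ``chaining across $t$'' as the hard part and leave it open. In fact this is the easy part, and the paper's proof dispatches it in one line. Since $m$ is odd, the balanced multiset in row $t$ has second-coordinate sum $\equiv 0\pmod m$; but that sum telescopes:
\[
\sum_{i=2}^{\,n+1-t}(b_{i+t}-b_i)\;=\;\sum_{j=n+2-t}^{n+1}b_j\;-\;\sum_{i=2}^{t+1}b_i.
\]
Subtracting the row-$t$ congruence from the row-$(t{-}1)$ congruence immediately gives $b_{n+2-t}\equiv b_{t+1}\pmod m$, i.e.\ $b_i=b_{n+3-i}$. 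No character theory, no $\{0,1\}$-structure of the $A_j$, no case analysis --- just summing the exponents and telescoping. Your character-theoretic route, while not wrong in spirit, is substantially harder than necessary and you do not close it; the parallel ``combinatorial route'' you mention is exactly the paper's, and your stated obstacle (``make the chaining uniform in $n$'') evaporates once you sum rather than look at individual differences.

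Your Step~3 parity argument is essentially the paper's: from $\lambda_1+(m-1)\mu_1=n-2$ with $m$ odd one reads off $\lambda_1\equiv n\pmod 2$. Your positivity argument, however, diverges and is not convincing as written --- the inequality $\lambda_3\le n-3$ is not justified, and the ``finitely many remaining cases'' are not enumerated. The paper instead \emph{uses} the just-proved symmetry: since $b_i=b_{n+3-i}$, the pair $(i,b_i),(n+3-i,b_i)\in R_a$ contributes the element $(2i-(n+3),0)$ to $R_aR_a^{(-1)}$, and letting $i$ run produces witnesses $(t,0)$ for every relevant $t$, forcing $\lambda_1\ge1$ (and $\ge2$ in the even case). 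This is both shorter and self-contained. Your Step~1, while correct, is not needed for any of this.
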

\begin{proof}
As $R_a=\{(2,b_2),(3,b_3),\dots,(n+1,b_{n+1})\}$ we have 
\be \label{eq:symm}
\begin{array}{ll}		    	R_a R_a^{(-1)} - n(0,0)=&\{(1,b_3-b_2),(1,b_4-b_3),\dots,(1,b_{n+1}-b_n),\\ &(2,b_4-b_2),(2,b_5-b_3),\dots,(2,b_{n+1}-b_{n-1}),\\ 
	&\cdots, \\
	&(n+1,b_2-b_3),(n+1,b_3-b_4),\dots,(n+1,b_n-b_{n+1})\} .
\end{array}
\ee
Since $R_a$ is a PDPDS, the first and last  rows (other rows) in \eqref{eq:symm}  cover the set $P\backslash e_P$ exactly $\mu_1$ (resp. $\mu_2$) times. Thus  we get
\be
\begin{array} {ccc}
  (b_3-b_2)+(b_4-b_3)+\dots+(b_{n+1}-b_n) &\equiv & 0 \mod m  \\
  (b_4-b_2)+(b_5-b_3)+\dots+(b_{n+1}-b_{n-1}) &\equiv& 0 \mod m \\
  &\vdots & \\
  (b_2-b_3)+(b_3-b_4)+\dots+(b_n-b_{n+1}) &\equiv & 0 \mod m \nn
\end{array}
\ee
Therefore, we get  $b_i=b_{n+3-i}$.

For the second part, 
we know that $\lambda_1+(m-1)\mu_1=n-2$ and $\lambda_3+(m-1)\mu_3=n-1$ by the definition of PDPDS. If $n$ is odd (even), then $\lambda_1$ is odd (resp. even) and $\lambda_3$ is  even (resp. odd).  
	Besides, when $n$ is a odd, $\lambda_1 \geq 1$ as $\{(2,0),(3,0),\dots,(n,0)\} \subset R_a R_a^{-1}$ by the first part. Similarly, if $n$ is  even, then $\lambda_1 \geq 2$, $\lambda_3 \geq 1$ as $\{(3,0),(5,0),\dots,(n-1,0),(3,0),(5,0),\dots,(n-1,0),(1,0),(n+1,0)\} \subset R_a R_a^{-1}$. Therefore, we get the  result.
\end{proof} 
	
 Theorem \ref{thm:sym} says that an almost $m$-ary NPS with two consecutive zero-symbols for an odd prime $m$ is symmetric, which we give in the following corollary. Here we take the first zero-symbol in each period to the end of the period of the sequence. As the sequence is periodic, this does not affect the auto-correlation coefficients. 
	
\begin{corollary} \label{cor:sym}
	Let $m$ be an odd prime, $\gamma_1,\gamma_2 \in \Z$ and and $n \in \Z^+$. An almost $m$-ary NPS of type $(\gamma_1, \gamma_2)$ and period $n+2$ with two consecutive zero-symbols is symmetric. 
\end{corollary}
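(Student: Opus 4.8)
The plan is to treat Corollary~\ref{cor:sym} as a direct translation of Theorem~\ref{thm:sym} into the language of sequences, once the two consecutive zero-symbols have been moved into the positions $0$ and $1$. Since all the combinatorial work is already packaged in Theorem~\ref{thm:sym} (whose content is the palindromic identity $b_i=b_{n+3-i}$), what remains is a short reduction and an index-matching argument.

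First I would normalise the sequence. Let $\underline{a}$ be an almost $m$-ary NPS of type $(\gamma_1,\gamma_2)$ and period $n+2$ whose two zero-symbols lie in consecutive positions $j$ and $j+1$ (indices mod $n+2$, including the wrap-around case $j=n+1$). Replace $\underline{a}$ by its cyclic shift carrying position $j$ to position $0$; the autocorrelation function \eqref{df:autocor} is unchanged by cyclic shifts, so the shifted sequence is again an almost $m$-ary NPS of type $(\gamma_1,\gamma_2)$, now with its zero-symbols exactly at $0$ and $1$, i.e. the case $\ell=1$, $s=2$ of the set-up around \eqref{eqn:NPS2DPDS}. Being symmetric (invariant under some reflection of $\Z_{n+2}$) is itself a cyclic-shift-invariant property, so it is enough to prove the corollary for this normalised sequence.

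Next I would run the chain Proposition~\ref{th:l_PDPDS} $\Rightarrow$ Theorem~\ref{thm:sym}. By Proposition~\ref{th:l_PDPDS} with $\ell=1$, the set $R_a\subset G=H\times P$ attached to the normalised sequence through \eqref{eqn:NPS2DPDS} is a $1$-PDPDS in $G$ relative to $H$ and $P$ with the parameters listed there. This $R_a$ is exactly of the shape required in Theorem~\ref{thm:sym} ($s=2$, $\ell=1$, and it comes from a sequence), and $m$ is an odd prime by hypothesis, so the first conclusion of Theorem~\ref{thm:sym} gives $b_i=b_{n+3-i}$ for every index $i$ in the support $\{2,3,\dots,n+1\}$, where the nonzero entries are $a_i=\zeta_m^{b_i}$.

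Finally I would read this back as symmetry. The map $i\mapsto 1-i \pmod{n+2}$ is the unique reflection of $\Z_{n+2}$ that interchanges the two zero-positions $0$ and $1$, and on $\{2,\dots,n+1\}$ it coincides with $i\mapsto n+3-i$; hence $b_i=b_{n+3-i}$ says precisely that the (shifted) sequence is fixed by this reflection, i.e. it is symmetric. Equivalently, transporting the first zero-symbol to the end of the period — which, by periodicity, alters none of the autocorrelation coefficients — turns the identity into the usual palindrome relation $a_i=a_{n+1-i}$. I do not anticipate a real obstacle here: the only points needing care are verifying that the cyclic shift preserves both the NPS type and the $\ell=1$ structure, and lining up the index conventions of Theorem~\ref{thm:sym} with the positions $0,1$ so that $b_i=b_{n+3-i}$ becomes the asserted symmetry.
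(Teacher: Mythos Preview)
Your proposal is correct and follows exactly the route the paper intends: the corollary is stated immediately after Theorem~\ref{thm:sym} as its direct reinterpretation for sequences, with the paper only adding the remark that moving the first zero-symbol to the end of the period (a cyclic shift) leaves the autocorrelation unchanged. Your write-up simply makes explicit the normalisation to $\ell=1$ and the invocation of Proposition~\ref{th:l_PDPDS} that the paper leaves implicit.
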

\begin{example}
	$\underline{a}_1=(0,\zeta_3^2,1,\zeta_3^2,0,\ldots)$ and $\underline{a}_2=(0,\zeta_3^2,\zeta_3,1,\zeta_3,\zeta_3^2,0,\ldots)$  are almost ternary NPS of type $(\gamma_1,\gamma_2)$ with two consecutive zero-symbols and their first periods are symmetric with respect to 1 at their center. 
	The converse of Corollary \ref{cor:sym} is not correct. 
	For instance, $\underline{a}_1=(0,\zeta_3,\zeta_3^2,\zeta_3,\zeta_3^2,\zeta_3,0,\ldots)$, $\underline{a}_2=(0,\zeta_3^2,\zeta_3^2,1,\zeta_3^2,\zeta_3^2,0,\ldots)$ and $\underline{a}_3=(0,\zeta_3^2,1,\zeta_3^2,1,\zeta_3^2,0,\ldots)$ are almost ternary symmetric sequences of period $7$ with two consecutive zero-symbols, 
	but these sequences are not a NPS. 
\end{example}

By using the multipliers given in Proposition \ref{prop:multiplier}  and the symmetry property given in Theorem \ref{thm:sym}, we will study the set $R_a$ to get some results on the almost $m$-ary NPS. 
It is well known that one can construct a difference set  by taking union of some orbits of a multiplier (see \cite[Theorem 1.3.8]{pott2006finite} and \cite[Result 6]{chee2010almost}). Furthermore,  one can decide the existence and the non-existence of difference sets.
Now we consider the difference set $R_a$ defined in \eqref{eqn:NPS2DPDS} and we try to write it a union of some orbits of its multiplier.

We first consider the sequences with consecutive zero-symbols and then we deal with the non-consecutive case. Let $m$ be an odd prime, $R_a \subset G$ be defined as in $\eqref{eqn:NPS2DPDS}$ for $s=2$ such that $i_1=0$ and $i_2=1$ and $t$ be a multiplier of $R_a$. 

Suppose that $t \equiv 1 \mod(n+2)$ and $t \equiv k \mod m$. Hence, we have $R_a^{(t)} = R_a + g$ where $g=(0,g_2)\in G$ for some $g_2\in \Z_m$. Then we get $R_a^{(t)}=\{(2,kb_2),(3,kb_3),\ldots,(n+1,kb_{n+1}\}$ and $R_a+g=\{(2,b_2+g_2),(3,b_3+g_2),\ldots,(n+1,b_{n+1}+g_2)\}$.  Therefore, we get $b_2=b_3=\cdots=b_{n+1}$, i.e. $\underline{a}$ is trivial, unless  $k = 1$. 

Now, suppose that $t \equiv -1 \mod(n+2)$ and $t \equiv k \mod m$. We have $R_a^{(t)} = R_a + g$ where $g=(n+1,g_2)\in G$ for some $g_2\in \Z_m$. Then we consider the sets $R_a^{(t)}=\{(n,kb_2),(n-1,kb_3),\ldots,(1,kb_{n+1}\}$ and $R_a+g=\{(1,b_2+g_2),(2,b_3+g_2),\ldots,(n,b_{n+1}+g_2)\}$. As the second components of the elements in $R_a$ are symmetric by Theorem \ref{thm:sym},  we similarly get that this sequence is trivial unless $k=1$.  

We finally consider the case $t\equiv\pm1\mod (n+2)$ and $t\equiv1 \mod m$. For the case $t\equiv -1\mod (n+2)$ and $t\equiv1 \mod m$, one can not have a symmetric sequence.
Besides, for the case $t\equiv 1\mod (n+2)$ and $t\equiv1 \mod m$ all orbits are of length 1.  Therefore, we have proved the following result.
\begin{proposition} \label{prop:orbit}
 Let $m$ be an odd prime. Let $\underline{a}$ be a non-trivial almost $m$-ary NPS of type $(\gamma_1,\gamma_2)$  with consecutive two zero-symbols. Let $R_a \in G$ be a set constructed as in \eqref{eqn:NPS2DPDS} and $t$ be a multiplier of $R_a$.  Let $\Phi$ be any collection of the orbits of $G$ under $t$. Then $$R_a \neq \bigcup_{A\in \Phi}A.$$
\end{proposition}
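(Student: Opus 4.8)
The plan is to reformulate the claim as a statement about $t$-invariance of $R_a$ and then rule it out via Proposition~\ref{prop:multiplier}. Every orbit $A$ of $G$ under $t$ satisfies $A^{(t)}=A$, so any union of orbits is fixed by $t$; conversely, any $t$-invariant subset of $G$ is the union of the orbits it contains. Hence $R_a=\bigcup_{A\in\Phi}A$ for some collection $\Phi$ of orbits if and only if $R_a^{(t)}=R_a$, so it suffices to assume $R_a^{(t)}=R_a$ and derive a contradiction. Writing $G=\Z_{n+2}\times\Z_m$ additively and assuming (as in the discussion preceding the statement) that the two zero-symbols lie at positions $0$ and $1$, we have $R_a=\{(i,b_i):2\le i\le n+1\}$; in particular the set of first ($H$-)coordinates occurring in $R_a$ is exactly $\Z_{n+2}\setminus\{0,1\}=\{2,3,\dots,n+1\}$.

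Next I would invoke Proposition~\ref{prop:multiplier} (applicable since here $\ell=1$): a multiplier $t$ of $R_a$ satisfies $t\equiv\pm1\mod(n+2)$, and since $\gcd(t,|G|)=1$ with $m$ prime we may write $t\equiv k\mod m$ for some $k\in\{1,\dots,m-1\}$. Then I would split on $t\bmod(n+2)$. If $t\equiv-1\mod(n+2)$, the set of first coordinates of $R_a^{(t)}$ is $-\{2,\dots,n+1\}=\{1,2,\dots,n\}$, which contains $1$ but not $n+1$ and so differs from $\{2,\dots,n+1\}$; this contradicts $R_a^{(t)}=R_a$ outright, with no recourse to symmetry.

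If instead $t\equiv1\mod(n+2)$, then $R_a^{(t)}=\{(i,kb_i):2\le i\le n+1\}$, and since the first coordinates of $R_a^{(t)}$ and $R_a$ already coincide, equating the unique elements sharing a given first coordinate $i$ forces $kb_i\equiv b_i\mod m$, i.e. $(k-1)b_i\equiv0\mod m$, for every $i$. As $m$ is prime, this gives $k\equiv1\mod m$ or $b_i\equiv0$ for all $i$; the latter makes every nonzero entry of $\underline a$ equal to $1$, so $\underline a$ is trivial, contrary to hypothesis. Hence $k=1$, so $t\equiv1\mod(n+2)$ and $t\equiv1\mod m$; then multiplication by $t$ fixes every element of $G$, all orbits are singletons, and $t$ is the trivial multiplier, which is excluded since it provides no construction of $R_a$. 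Assembling the cases yields $R_a\neq\bigcup_{A\in\Phi}A$. I do not expect a serious obstacle, as the real content sits in Proposition~\ref{prop:multiplier}; the only point needing a little care is disposing of the degenerate multiplier $t\equiv1\mod(n+2)m$, and it is worth noting that Theorem~\ref{thm:sym} is not strictly needed here, though one can recast the $t\equiv-1$ branch via the symmetry $b_i=b_{n+3-i}$ as in the preceding discussion.
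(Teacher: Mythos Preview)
Your argument is correct and, in the $t\equiv-1$ case, actually cleaner than the paper's. The paper does not immediately specialize to $g=0$; instead it works with the full multiplier relation $R_a^{(t)}=R_a+g$ and, in the case $t\equiv-1\mod(n+2)$, matches $R_a^{(t)}$ against $R_a+(n+1,g_2)$, then invokes the symmetry $b_i=b_{n+3-i}$ from Theorem~\ref{thm:sym} to conclude $kb_i=b_i+g_2$ for all $i$ and hence triviality unless $k=1$. Only afterwards does it return to the orbit question for the surviving multipliers with $k=1$. Your observation that the orbit-union condition is exactly $R_a^{(t)}=R_a$ lets you bypass this: with no translate the first-coordinate set $\{1,\dots,n\}$ of $R_a^{(t)}$ visibly differs from $\{2,\dots,n+1\}$, so the $t\equiv-1$ branch dies immediately and Theorem~\ref{thm:sym} is unnecessary, as you note. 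The $t\equiv1$ branch and the dismissal of the degenerate case $t\equiv1\mod m(n+2)$ (singleton orbits, trivial multiplier) match the paper's treatment. Both approaches rest on Proposition~\ref{prop:multiplier}; yours just exploits the $g=0$ specialization earlier, which is the natural thing to do given the statement.
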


On the other hand, when we consider the sequences $\underline{a}$ with non-consecutive two zero-symbols, we get that one can write the set $R_a \in G$ as a union of some orbits of $G$ under a multiplier $t$ of $R_a$. We give its example below.
\begin{example} \label{ex:orbit}
Let  $R_a \subset G$ be defined as in $\eqref{eqn:NPS2DPDS}$ for $s=2$ such that $i_1=0$ and $i_2\neq 1$ and $t$ be a multiplier of $R_a$. 
Let $G=\Z_{10}\times \Z_3$  and we choose $t=19\equiv -1 \mod 10$ as in  Proposition \ref{prop:multiplier}. 
The orbits of $G$ under the action $x\rightarrow 19x$ are
    \{ (0, 2) \},
  \{ (5, 0) \},
  \{ (4, 0), (6, 0) \},
  \{ (1, 1), (9, 1) \},
  \{ (7, 0), (3, 0) \},
  \{ (0, 1) \},
  \{ (9, 0), (1, 0) \},
  \{ (5, 2) \},
 \{ (0, 0) \},
 \{ (8, 2), (2, 2) \},
 \{ (5, 1) \},
 \{ (7, 2), (3, 2) \},
 \{ (7, 1), (3, 1) \},
 \{ (1, 2), (9, 2) \},
 \{ (8, 1), (2, 1) \},
 \{ (6, 2), (4, 2) \},
 \{ (6, 1), (4, 1) \},
  \{ (2, 0), (8, 0) \}.
  Then we, by exhaustive search, get that  $R_a= \{ (1, 2), (9, 2) \} \cup \{ (8, 2), (2, 2) \} \cup  \{ (7, 0), (3, 0) \} \cup \{ (6, 1), (4, 1) \}$ which is a $5$-(10,3,8,2,0,0,2,4)-PDPDS. Herein, the  sequence $\underline{a}=(0,\zeta_3^2,\zeta_3^2,1,\zeta_3,0,\zeta_3,1,\zeta_3^2,\zeta_3^2 ,\ldots)$  is an almost ternary NPS of type $(-4,0)$ with two zero-symbols. 
\end{example}

We close this section with a conjecture about the non-existence of an almost $m$-ary NPS. 
\begin{conjecture}\label{conj:1}
Let $m\geq5$ be a prime, $\gamma_1,\gamma_2 \in \Z$ and $n \in \Z^+$.
There does not exist a non-trivial almost $m$-ary NPS of type $(\gamma_1, \gamma_2)$ and period $n+2$ with two consecutive zero-symbols.
\end{conjecture}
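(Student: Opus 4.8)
Since the statement is a conjecture, what follows is a plan of attack rather than a complete argument, together with an indication of where I expect it to stall. The plan is to argue by contradiction: assume a non-trivial almost $m$-ary NPS $\underline{a}$ of type $(\gamma_1,\gamma_2)$ and period $n+2$ with two consecutive zeros exists for some prime $m\ge 5$. After a cyclic shift I may place the zeros at positions $0$ and $1$; then Proposition~\ref{th:l_PDPDS} with $\ell=1$ turns $R_a$ into a $1$-$(n+2,m,n,\lambda_1,0,\lambda_3,\mu_1,\mu_2)$-PDPDS with the parameters listed there, Proposition~\ref{prop:non} lets me assume $\gamma_2\ge -2$, and Theorem~\ref{thm:sym} supplies the palindromic symmetry $b_i=b_{n+3-i}$ together with the parity of $\lambda_1$. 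The objective is to pile enough of these constraints onto the group-ring identity \eqref{eqn:l-PDPDS} that no integer choice of the $b_i$ can survive.

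First I would squeeze everything out of the characters of $G=H\times P$ applied to \eqref{eqn:l-PDPDS}. Writing $s_j=|\{i:b_i=j\}|$ and $\beta=\sum_{j=0}^{m-1}s_j\zeta_m^{\,j}\in\Z[\zeta_m]$, every character trivial on $H$ and non-trivial on $P$ yields $|\chi(R_a)|^2=(n-1)\gamma_2+2\gamma_1+n=:L$ (equivalently $\sum_j s_j^2-\sum_j s_js_{j-1}$, read off from Proposition~\ref{prop:s_i}), so $\beta\overline{\beta}=L$, every $\Q$-conjugate of $\beta$ has absolute value $\sqrt L$, and $N_{\Q(\zeta_m)/\Q}(\beta)=L^{(m-1)/2}$; reducing $\beta$ modulo the prime $(1-\zeta_m)$ only recovers $m\mid L-n^2$, which is already forced by $\gamma_1\equiv n-1$, $\gamma_2\equiv n-2\pmod m$ and so costs nothing. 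The characters non-trivial on $H$ return the autocorrelation spectrum: $|\chi(R_a)|^2=(n-\gamma_2)+2(\gamma_1-\gamma_2)\cos\frac{2\pi a}{n+2}\ge 0$ when $\chi$ is also non-trivial on $P$, and $|1+\zeta_{n+2}^{\,a}|^2$ when $\chi$ is trivial on $P$. I would then try to over-determine the multiset $\{b_2,\dots,b_{n+1}\}$ by combining $\beta\overline{\beta}=L$, the norm identity, the factorisation of $L$ in $\Z[\zeta_m]$, these non-negativity conditions, the palindromic symmetry, and the multiplier restriction $t\equiv\pm1\pmod{n+2}$ of Proposition~\ref{prop:multiplier}, all fed back into \eqref{eqn:l-PDPDS}.

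The hard part --- and the reason this is still a conjecture --- is turning that package into an actual contradiction, since the obvious necessary conditions (non-negativity, Parseval, the congruence $L\equiv n^2\pmod m$) are all automatically compatible. The union-of-orbits technique gives nothing here either: by Proposition~\ref{prop:orbit} the sets $R_a$ arising from such sequences are never a union of orbits of a multiplier, and by the discussion preceding it the only admissible multiplier acts trivially. So the contradiction must be genuinely arithmetic, and it has to evaporate exactly at $m\in\{2,3\}$, where the paper exhibits actual binary and ternary examples; this points towards a field-descent or self-conjugacy bound on $\beta$ whose hypotheses are met precisely when $m\ge 5$. My expectation is that even with all of the above one is left needing either a genuinely new structural idea, or a split into small $n$ --- cleared by the exhaustive-search philosophy behind Example~\ref{ex:orbit}, now used to rule out every admissible $R_a$ --- and large $n$ --- handled by an asymptotic Gauss-sum estimate forcing $L$ away from every squared character value. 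I would therefore not expect a short proof, and it is plausible that the full conjecture needs tools beyond those developed in this paper.
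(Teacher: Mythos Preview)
The paper does not prove this statement; it is explicitly labelled a \emph{conjecture} and left open, with no argument offered beyond the surrounding evidence (the symmetry of Theorem~\ref{thm:sym}, the multiplier rigidity of Proposition~\ref{prop:multiplier}, and the orbit obstruction of Proposition~\ref{prop:orbit}). Your recognition of this fact, and your decision to present a plan of attack with an honest assessment of where it stalls, is exactly right.

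Your character computations are correct: applying $\chi$ trivial on $H$ and non-trivial on $P$ to \eqref{eqn:l-PDPDS} with the parameters of Proposition~\ref{th:l_PDPDS} does give $|\chi(R_a)|^2=n+(n-1)\gamma_2+2\gamma_1$, the mixed characters give $n-\gamma_2+2(\gamma_1-\gamma_2)\cos\frac{2\pi a}{n+2}$, and the characters non-trivial on $H$ but trivial on $P$ give $|1+\zeta_{n+2}^a|^2$; the congruence $L\equiv n^2\pmod m$ is indeed automatic from integrality of $\mu_1,\mu_2$. Your diagnosis of the obstruction is also sound: all of these constraints are mutually compatible, the multiplier and orbit machinery is exhausted by Propositions~\ref{prop:multiplier} and~\ref{prop:orbit}, and any genuine contradiction must be arithmetic in nature and must vanish at $m\in\{2,3\}$ where the paper exhibits examples. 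The suggestion that a field-descent or self-conjugacy argument on $\beta\in\Z[\zeta_m]$ is the natural next step is reasonable and goes beyond anything the paper attempts. In short, there is nothing to compare your proposal against, and your outline is a sensible research programme rather than a flawed proof.
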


\section{Construction}\label{sec:cons}
In this section, we present two construction methods for a family of almost $m$-ary sequences. First, we show that the auto-correlation coefficients of an $m$-ary sequence is invariant under some operations. 
\begin{theorem} \label{thm:p_ary-const.}
    Let $m$ be a prime and \underline{a} be an $m$-ary sequence of period $n$ such that $C_{\underline{a}}(t) \in \Z$ for $t=1,2,\ldots, n-1 $. Let the sequences 
    $\underline{b}$, $\underline{c}$ be defined by 
    $\underline{b}=\zeta_m^k\underline{a}$, and $\underline{c}=\underline{a}*\underline{b}$, where $k \in \Z_m$ and $*$ is a component-wise multiplication. Then, $$C_{\underline{a}}(t)=C_{\underline{b}}(t)=C_{\underline{c}}(t)$$
    for $t=1,2,\ldots, n-1 $.
\end{theorem}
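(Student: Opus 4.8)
The plan is to handle $\underline{b}$ and $\underline{c}$ separately, because the identity $C_{\underline{b}}(t)=C_{\underline{a}}(t)$ needs no hypothesis at all, whereas $\underline{c}$ is exactly where the integrality assumption is used.

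First I would dispose of $\underline{b}$. Writing $b_i=\zeta_m^k a_i$ and using $\overline{\zeta_m^k}\,\zeta_m^k=|\zeta_m^k|^2=1$, one reads off directly from \eqref{df:autocor} that
\[
C_{\underline{b}}(t)=\sum_{i=0}^{n-1}\zeta_m^k a_i\,\overline{\zeta_m^k a_{i+t}}=\sum_{i=0}^{n-1} a_i\overline{a_{i+t}}=C_{\underline{a}}(t)
\]
for every $t$, so this part is unconditional. For $\underline{c}$ the first step is to make $c_i$ explicit: since $c_i=a_i b_i=\zeta_m^k a_i^2$, we get $c_i\overline{c_{i+t}}=\zeta_m^k a_i^2\cdot\overline{\zeta_m^k}\,\overline{a_{i+t}}^2=\bigl(a_i\overline{a_{i+t}}\bigr)^2$, hence
\[
C_{\underline{c}}(t)=\sum_{i=0}^{n-1}\bigl(a_i\overline{a_{i+t}}\bigr)^2 .
\]
Now each $a_i\overline{a_{i+t}}=\zeta_m^{\,b_i-b_{i+t}}$ is an $m$-th root of unity, so $C_{\underline{a}}(t)\in\Z[\zeta_m]$, and comparing the two displays term by term shows that $C_{\underline{c}}(t)$ is the image of $C_{\underline{a}}(t)$ under the ring homomorphism $\sigma\colon\Z[\zeta_m]\to\Z[\zeta_m]$ determined by $\sigma(\zeta_m)=\zeta_m^2$. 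Since $m$ is an odd prime, $\gcd(2,m)=1$, so $\sigma$ is the restriction of an element of $\Gal(\Q(\zeta_m)/\Q)$ and therefore fixes $\Q$ pointwise; as $C_{\underline{a}}(t)\in\Z\subset\Q$ by hypothesis, $\sigma\bigl(C_{\underline{a}}(t)\bigr)=C_{\underline{a}}(t)$, and we conclude $C_{\underline{c}}(t)=C_{\underline{a}}(t)$ for $t=1,\dots,n-1$.

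I expect the only genuine obstacle to be articulating cleanly why the integrality hypothesis is precisely what makes the Frobenius-type twist harmless — i.e. the arithmetic of $\Z[\zeta_m]$. If one prefers to avoid Galois theory, the same conclusion follows by a hands-on count: put $N_j=\bigl|\{\,i:b_i-b_{i+t}\equiv j\ (\mathrm{mod}\ m)\,\}\bigr|$, so that $C_{\underline{a}}(t)=\sum_{j=0}^{m-1}N_j\zeta_m^j$ and $C_{\underline{c}}(t)=\sum_{j=0}^{m-1}N_j\zeta_m^{2j}$; using that $\{1,\zeta_m,\dots,\zeta_m^{m-2}\}$ is a $\Q$-basis of $\Q(\zeta_m)$ and that $1+\zeta_m+\dots+\zeta_m^{m-1}=0$, the condition $C_{\underline{a}}(t)\in\Z$ forces $N_1=N_2=\dots=N_{m-1}$, whereupon both sums collapse to $N_0-N_1$ because $j\mapsto 2j$ permutes the nonzero residues modulo the odd prime $m$. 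One should also keep an eye on the degenerate case $m=2$, where $\zeta_m\mapsto\zeta_m^2$ fails to be an automorphism and the claim for $\underline{c}$ must be interpreted accordingly; the $\underline{b}$ part above, however, holds verbatim for $m=2$ as well.
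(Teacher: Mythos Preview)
Your proof is correct and, at bottom, the same as the paper's: both compute $C_{\underline{b}}(t)$ directly, reduce $C_{\underline{c}}(t)$ to $\sum_i (a_i\overline{a_{i+t}})^2$, and then use that squaring permutes the nonzero residues modulo an odd prime. Your ``hands-on count'' alternative is exactly what the paper does, only in different notation---the paper writes $C_{\underline{a}}(t)=x+y\sum_{i=1}^{m-1}\zeta_m^i$, which is your $N_0=x$, $N_1=\cdots=N_{m-1}=y$. Your Galois-theoretic packaging (applying $\sigma:\zeta_m\mapsto\zeta_m^2$ to an element already known to lie in $\Z$) is a cleaner way to say the same thing; and you are more careful than the paper in flagging the degenerate case $m=2$, where the paper's key step $\sum_{i=1}^{m-1}\zeta_m^{2i}=\sum_{j=1}^{m-1}\zeta_m^j$ silently assumes $\gcd(2,m)=1$.
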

\begin{proof}
By using \eqref{df:autocor}, 
we have 
\be  \nn 
\begin{array}{ll}
C_{\underline{b}}(t)=\dd \sum_{i=0}^{n-1}{b_i\overline{b_{i+t}}}
=\dd \sum_{i=0}^{n-1}{a_i\zeta_m^k\overline{a_{i+t}\zeta_m^k}} 
=\dd \sum_{i=0}^{n-1}{a_i\zeta_m^k\overline{a_{i+t}\zeta_m^k}} 
=C_{\underline{a}}(t),
\end{array}
\ee
and
\be \label{eq:C_c}
\begin{array}{ll}
C_{\underline{c}}(t)=\dd\sum_{i=0}^{n-1}{c_i\overline{c_{i+t}}}
=\dd \sum_{i=0}^{n-1}{a_ib_i\overline{a_{i+t}b_{i+t}}} 
=\dd \sum_{i=0}^{n-1}{a_ia_i\zeta_m^k\overline{a_{i+t}a_{i+t}\zeta_p^k}} 
= \sum_{i=0}^{n-1}{a_i^2\overline{a_{i+t}^2}}.
\end{array}
\ee
Since $m$ is a prime and $C_{\underline{a}}(t) \in \Z$ for $t=1,2,\ldots, n-1 $, there exist $x,y \in \Z$ such that
\be \label{eq:C_a}
C_{\underline{a}}(t) =  \sum_{i=0}^{n-1}{a_i\overline{a_{i+t}}} =x+ y\sum_{i=1}^{m-1}\zeta_m^i.
\ee
Then, 
\be \label{eq:C_a2}
\sum_{i=0}^{n-1}{a_i^2\overline{a_{i+t}^2}} = x+ y\sum_{i=1}^{m-1}\zeta_m^{2i} = x+ y\sum_{j=1}^{m-1}\zeta_m^j.
\ee
Hence, we get the result by combining \eqref{eq:C_c}, \eqref{eq:C_a}  and \eqref{eq:C_a2}.
\end{proof}

\begin{example}
Let  $\underline{a}=(0,0,\zeta_3^2,\zeta_3,1,\zeta_3,\zeta_3^2,\ldots)$. By Theorem \ref{thm:p_ary-const.}, we get
\be \nn
\begin{array}{ll}
   \underline{a}_1=\underline{a}=(0,0,\zeta_3^2,\zeta_3,1,\zeta_3,\zeta_3^2,\ldots) \\ 
    \underline{a}_2=\zeta_3^1*\underline{a}_1=(0,0,1,\zeta_3^2,\zeta_3,\zeta_3^2,1,\ldots)\\
    \underline{a}_3=\zeta_3^2*\underline{a}_1=(0,0,\zeta_3,1,\zeta_3^2,1,\zeta_3,\ldots)\\
    \underline{a}_4=\underline{a}_1*\underline{a}_2=(0,0,\zeta_3^2,1,\zeta_3,1,\zeta_3^2,\ldots)\\
     \underline{a}_5=\zeta_3^1*\underline{a}_4=(0,0,1,\zeta_3,\zeta_3^2,\zeta_3,1,\ldots) \\
     \underline{a}_6=\zeta_3^2*\underline{a}_4=(0,0,\zeta_3,\zeta_3^2,1,\zeta_3^2,\zeta_3,\ldots).
\end{array}
\ee
 These sequences are almost ternary NPS of type $(-2,0)$ with two consecutive zero-symbols.
\end{example}
\begin{remark}
By Theorem \ref{thm:p_ary-const.}, we can construct $2m$ many $m$-ary sequences from an $m$-ary sequence with the same autocorrelation values. We note that Theorem \ref{thm:p_ary-const.} does not hold  for a composite number $m$. Let $\underline{a}$, $\underline{b}$, $\underline{c}$ be sequences defined as in Theorem \ref{thm:p_ary-const.}. The sequence $\underline{c}$  can  have a different autocorrelation value than $\underline{a}$ and $\underline{b}$. 
For example, we take an almost quaternary sequence  $\underline{a}=(0,0,1,\zeta_4,\zeta_4^2,\zeta_4^2,\zeta_4^2,\zeta_4^3,\zeta_4^2,\zeta_4,\zeta_4^2,\zeta_4^3,\zeta_4^2,\zeta_4^2,\zeta_4^2,\zeta_4,1,\ldots)$. and consider the following sequences
\be \nn
\begin{array}{ll}
   \underline{a}=(0,0,1,\zeta_4,\zeta_4^2,\zeta_4^2,\zeta_4^2,\zeta_4^3,\zeta_4^2,\zeta_4,\zeta_4^2,\zeta_4^3,\zeta_4^2,\zeta_4^2,\zeta_4^2,\zeta_4,1,\ldots), \\ 
    \underline{b}=\zeta_4^1*\underline{a}=(0,0,\zeta_4,\zeta_4^2,\zeta_4^3,\zeta_4^3,\zeta_4^3,1,\zeta_4^3,\zeta_4^2,\zeta_4^3,1,\zeta_4^3,\zeta_4^3,\zeta_4^3,\zeta_4^2,\zeta_4,\ldots),\\
    \underline{c}=\underline{a}*\underline{b}=(0,0,\zeta_4,\zeta_4^3,\zeta_4,\zeta_4,\zeta_4,\zeta_4^3,\zeta_4,\zeta_4^3,\zeta_4,\zeta_4^3,\zeta_4,\zeta_4,\zeta_4,\zeta_4^3,\zeta_4,\ldots).
\end{array}
\ee The sequences $\underline{a}$ and $\underline{b}$  are almost quaternary NPS of type $(4,1)$ with two consecutive zero-symbols. However, the sequence $\underline{c}$ has three distinct autocorrelation values: $-6$, $5$, and $-3$.
\end{remark}

Now, we give some preliminaries on cyclotomic classes, which we use to construct some almost $m$-ary nearly perfect sequences.
\begin{definition} \label{df:cyc-cst}
    Let $q$ be a prime such that $q=mf+1$ where $m,f\in \Z^+$ and $\alpha$ be a primitive element of $\Z_q^*$. The $m^{th}$ cyclotomic classes of $\Z_q$ are defined as
    $$D_k:=\{\alpha^{mi+k}| i=0,1,\dots, f-1\},$$
    for $k=0,1,\dots,m-1$. The cyclotomic classes disjointly decomposes $\Z_q^*$ as  $\Z^*_q= {\dd\bigsqcup^{m-1}_{k=0} D_k}$.
\end{definition}

We give the definition of an almost $m$-ary $\sigma$-sequence as given in \cite{shi2019family} and then we present a theorem about the almost $m$-ary $\sigma$-sequences.
\begin{definition}\cite[Definition 2]{shi2019family}\label{df:m-ary sigma seq.1}
    Let $\sigma$ be a permutation of $\Z_m$. The sequence $\underline{a}=(a_0,a_1,\ldots,a_{q-1},\ldots)$ is called an almost $m$-ary $\sigma$-sequence if 
    	\be \nn
		a_i= \left\lbrace \begin{array}{ll}  \zeta_m^k &  \mbox{if } i \in D_{\sigma(k)},\\ 0 & \mbox{i=0.} 
		\end{array} \right.
        \ee
\end{definition}
\begin{theorem}\cite[Theorem 3]{shi2019family}\label{thm:m_ary-const11}
   Let $q$ be a prime such that $q=mf+1$ for some $m,f\in\Z^+$. Let $\underline{a}$ be an almost $m$-ary $\sigma$-sequence of period $q$ for a linear permutation $\sigma$. Then the sequence $\underline{a}$ is an almost $m$-ary NPS of type $-1$ with one zero-symbol.
\end{theorem}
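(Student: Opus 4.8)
The plan is to observe that, for a \emph{linear} permutation $\sigma$, the almost $m$-ary $\sigma$-sequence is — up to a fixed unimodular scalar — nothing but a multiplicative character of $\F_q$ of order $m$, extended by $0$ at the origin; once this is seen, the claim reduces to a one-line character-sum evaluation. Write $\tau=\sigma^{-1}$, which is again a linear permutation of $\Z_m$, say $\tau(j)\equiv cj+d\pmod m$ with $\gcd(c,m)=1$ (the constant $d$ is absent if one's notion of ``linear'' excludes it, and in any case it will play no role). By Definition \ref{df:m-ary sigma seq.1} together with Definition \ref{df:cyc-cst}, for $i\in D_j$ one has $a_i=\zeta_m^{\tau(j)}=\zeta_m^{d}\,\zeta_m^{cj}$, while $a_0=0$. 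Let $\alpha$ be the primitive root fixed in Definition \ref{df:cyc-cst}, and let $\rho$ be the multiplicative character of $\F_q^{*}$ with $\rho(\alpha)=\zeta_m^{c}$; since $\gcd(c,m)=1$ it has order exactly $m$, and from $\rho(\alpha^{mi+j})=\zeta_m^{cj}$ we see that $\rho$ is constant, equal to $\zeta_m^{cj}$, on each cyclotomic class $D_j$. Hence $a_i=\zeta_m^{d}\rho(i)$ for every $i\neq 0$, and $a_0=0=\rho(0)$ under the usual convention. This identification is the only place where the hypothesis that $\sigma$ be linear is used.

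Next, $C_{\underline a}(t)=\sum_{i}a_i\overline{a_{i+t}}$ is unchanged when every entry is multiplied by the fixed scalar $\zeta_m^{d}$ of modulus $1$ (exactly as in the proof of Theorem \ref{thm:p_ary-const.}), so it suffices to compute $\sum_{i\in\F_q}\rho(i)\overline{\rho(i+t)}$ for $1\le t\le q-1$. The terms with $i=0$ and $i=-t$ vanish. For the remaining $i$, write $i+t=i\,(1+ti^{-1})$; multiplicativity of $\rho$ and $\rho(i)\overline{\rho(i)}=|\rho(i)|^{2}=1$ give $\rho(i)\overline{\rho(i+t)}=\overline{\rho(1+ti^{-1})}$. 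As $i$ runs over $\F_q^{*}\setminus\{-t\}$, the element $v:=1+ti^{-1}$ runs bijectively over $\F_q\setminus\{0,1\}$, so
\[
C_{\underline a}(t)=\sum_{v\in\F_q\setminus\{0,1\}}\overline{\rho(v)}=\Big(\sum_{v\in\F_q}\overline{\rho(v)}\Big)-\overline{\rho(0)}-\overline{\rho(1)}=0-0-1=-1,
\]
using $\overline{\rho(0)}=0$ and the orthogonality relation $\sum_{v}\overline{\rho(v)}=0$, valid because $\overline{\rho}$ is a nontrivial character (one needs $m\ge 2$, which is harmless as $1$-ary sequences are excluded). Since this value is independent of the out-of-phase shift $t$, and $\underline a$ has exactly the single zero-symbol $a_0$ while every other entry is an $m$-th root of unity, $\underline a$ is an almost $m$-ary NPS of type $-1$ with one zero-symbol.

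The computation itself is routine; I expect the only genuine obstacle to be spotting the reduction to a character — that is, recognizing that $\tau=\sigma^{-1}$ being linear is precisely what converts the additive labelling of $\Z_m$ into the multiplicative grading of $\F_q^{*}$ by cyclotomic classes (for a general permutation $\sigma$ this breaks down and $C_{\underline a}(t)$ need not even be constant). An alternative, more hands-on route avoids characters: expand $C_{\underline a}(t)$ for $t\in D_h$ as $\sum_{a,b}\zeta_m^{\tau(a)-\tau(b)}(a,b)_m$, where $(a,b)_m$ are the cyclotomic numbers of order $m$, use the linearity of $\tau$ to check that the $h$-dependence cancels, and then sum over $a,b$ using the standard relations among the $(a,b)_m$. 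This needs only elementary properties of cyclotomic numbers but is computationally heavier, so I would present the character-sum argument.
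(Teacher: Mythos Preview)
The paper does not supply a proof of this statement: Theorem~\ref{thm:m_ary-const11} is quoted verbatim from \cite{shi2019family} and then invoked as a black box (for instance in the proof of Proposition~\ref{thm:m_ary-const1}). So there is no ``paper's own proof'' to compare against here.

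Your argument is correct. The identification of the $\sigma$-sequence (for linear $\sigma$) with a multiplicative character of order $m$ on $\F_q^{*}$, up to a unimodular scalar $\zeta_m^{d}$, is exactly right: linearity of $\tau=\sigma^{-1}$ is precisely what makes the exponent of $\zeta_m$ depend additively on the cyclotomic-class index, which matches the character grading. The subsequent computation is clean; the substitution $v=1+ti^{-1}$ does biject $\F_q^{*}\setminus\{-t\}$ onto $\F_q\setminus\{0,1\}$, and the orthogonality relation applies because $\gcd(c,m)=1$ forces $\overline{\rho}$ to be nontrivial (your caveat $m\ge 2$ is enough). The alternative route you sketch via cyclotomic numbers $(a,b)_m$ is in the spirit of how the present paper handles the analogous Theorem~\ref{thm:m_ary-const2}, and is likely closer to the original argument in \cite{shi2019family}; your character-sum version is shorter and more conceptual, at the cost of importing the orthogonality relation rather than working purely inside the combinatorics of cyclotomic classes.
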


We extend Theorem \ref{thm:m_ary-const11} for the case $q=2p=mf+l$ such that $l=p+1$ below.
\begin{proposition} \label{thm:m_ary-const1}
    Let $p$ be a prime, and $q=2p=mf+l$ such that $m,f\in\Z^+$ and $l=p+1$. Let $\underline{a}$ be an almost $m$-ary $\sigma$-sequence of period $  q$ for some linear permutation $\sigma$. 
    Then the sequence $\underline{a}$ is  an almost m-ary NPS of type $(\gamma_1,\gamma_2)$ with $p+1$ zero-symbols. In particular, 
    	\be \nn
		C_{\underline{a}}(t)= \left\lbrace \begin{array}{ll}  -1 &  \mbox{if } t\equiv 0 \mod 2,\\ 0 & \mbox{if } t\equiv 1 \mod 2.
		\end{array} \right.
        \ee
\end{proposition}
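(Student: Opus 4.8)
The plan is to realize the $\sigma$-sequence through a multiplicative character of $\Z_{2p}^*$ and then reduce the autocorrelation to a short character sum over $\Z_p$. First I would unwind the arithmetic forced by the hypotheses: $q=2p=mf+l$ with $l=p+1$ gives $mf=p-1=|\Z_{2p}^*|$, and $\Z_{2p}^*$ is cyclic of order $p-1$, so the $m^{\text{th}}$ cyclotomic classes $D_0,\dots,D_{m-1}$ (with respect to a primitive root $\alpha$ modulo $2p$) partition $\Z_{2p}^*$. Because $\sigma$ permutes $\Z_m$, the $\sigma$-sequence has $a_i=0$ precisely when $i\notin\Z_{2p}^*$, i.e. when $i$ is even or $p\mid i$; counting gives the $p$ even residues plus the single odd multiple of $p$, so there are exactly $p+1$ zero-symbols. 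Since $\sigma$ is linear, $\sigma^{-1}$ is affine, so there are a primitive $m^{\text{th}}$ root of unity $\omega$ and an exponent $d$ with $a_i=\zeta_m^{d}\,\omega^{\mathrm{ind}(i)}$ on $\Z_{2p}^*$, where $\mathrm{ind}$ is the discrete logarithm to base $\alpha$; writing $\chi$ for the order-$m$ multiplicative character $i\mapsto\omega^{\mathrm{ind}(i)}$ of $\Z_{2p}^*$ (extended by $\chi(i)=0$ off $\Z_{2p}^*$), the unimodular factor $\zeta_m^d$ cancels in $a_i\overline{a_{i+t}}$, so
\[
C_{\underline{a}}(t)=\sum_{i\in\Z_{2p}}\chi(i)\,\overline{\chi(i+t)}.
\]

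Next I would split on the parity of $t$. If $t$ is odd, then for every $i$ one of $i,i+t$ is even, hence $\chi$ kills it and $C_{\underline{a}}(t)=0$. For even $t$ with $0<t<2p$, I would transport the sum through the reduction isomorphism $\Z_{2p}^*\cong\Z_p^*$: $\chi$ corresponds to a nontrivial multiplicative character $\chi'$ of $\Z_p^*$ of order $m$ (this is where $m\ge 2$ enters), and the odd residues modulo $2p$ reduce bijectively onto $\Z_p$ (the difference of two of them is even, so it cannot equal $\pm p$). Since $t$ is even and $0<t<2p$, $c:=t\bmod p\neq 0$, and reindexing the surviving (odd $i$) terms by $x=i\bmod p$ gives
\[
C_{\underline{a}}(t)=\sum_{x\in\Z_p}\chi'(x)\,\overline{\chi'(x+c)}.
\]
Finally I would evaluate this reduced Jacobi-type sum: the substitution $x=cy$ (the $x=0$ and $x=-c$ terms vanish) turns it into $\sum_{y\in\Z_p}\chi'(y)\overline{\chi'(y+1)}$, and then writing $y+1=y(1+y^{-1})$ and setting $z=1+y^{-1}$ turns that into $\sum_{z\in\Z_p\setminus\{0,1\}}\overline{\chi'(z)}=-\overline{\chi'(1)}=-1$, using that $\chi'$ is nontrivial; equivalently one may cite $J(\chi',\overline{\chi'})=-\chi'(-1)$. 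Combining the two cases gives the asserted values of $C_{\underline{a}}(t)$, so $\underline{a}$ is an almost $m$-ary NPS of type $(0,-1)$ with $p+1$ zero-symbols.

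The two substitutions in the last step are routine. The step that needs genuine care is the passage to $\Z_p$: one must check that $\chi$ really descends to a character of $\Z_p^*$, that the odd residues modulo $2p$ reduce bijectively onto all of $\Z_p$, and that $c\neq 0$ for every admissible even $t$ — and this is precisely where the special shape $q=2p$, $l=p+1$ (equivalently $mf=p-1$) is used, so I expect it to be the only non-formal part of the argument.
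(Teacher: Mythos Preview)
Your argument is correct. The skeleton matches the paper's proof exactly: both observe that $mf=p-1$ forces the zero-symbols to sit at the even residues together with $i=p$ (hence $p+1$ of them), dispose of odd $t$ by the parity observation that one of $i,i+t$ is always even, and for even $t$ transport the correlation along the reduction isomorphism $\Z_{2p}^*\cong\Z_p^*$ to a shift $c\not\equiv 0\pmod p$.

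Where you diverge is only in the last step. The paper does not evaluate the reduced correlation directly; it simply remarks that after the reduction the sequence becomes an almost $m$-ary $\sigma$-sequence of prime period $p=mf+1$ and then invokes Theorem~\ref{thm:m_ary-const11} (Shi et al.) to conclude $C_{\underline a}(t)=-1$. You instead recast the nonzero entries as values of an order-$m$ multiplicative character, push this through the isomorphism to a nontrivial character $\chi'$ of $\Z_p^*$, and evaluate the resulting sum $\sum_{x}\chi'(x)\overline{\chi'(x+c)}$ by the standard Jacobi-type substitution to get $-1$. This buys you a self-contained proof that does not depend on the external reference, and it makes the paper's terse phrase ``the sequence turns into the sequence defined in Definition~\ref{df:m-ary sigma seq.1}'' precise; the cost is a few extra lines of elementary character-sum manipulation. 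The careful points you flag --- that the odd residues modulo $2p$ biject onto $\Z_p$, that $\chi$ descends to a nontrivial $\chi'$, and that $c\neq 0$ for every admissible even $t$ --- are exactly the content hidden in the paper's one-line reduction, and your verification of them is sound.
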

\begin{proof}
    As $\underline{a}$ is an almost $m$-ary $\sigma$-sequence, we have $a_i=0$ for $i\equiv 0 \mod 2$ and $i=p$ by Definition \ref{df:m-ary sigma seq.1}. If $t\equiv 1 \mod 2$, then it is clear that $C_{\underline{a}}(t)=0$.
On the other hand, when $t \not\equiv 1 \mod 2$, the sequence $\underline{a}$ turns into the sequence defined in Definition \ref{df:m-ary sigma seq.1}. Therefore, 
the proof follows by Theorem \ref{thm:m_ary-const11}.
\end{proof}
\begin{example} \label{ex:D_k}
Let $q=14$, $m=3$, $l=8$ and $\alpha=5$. We take $\sigma$  the identity permutation, i.e.~$\sigma(l)\equiv l \mod 3$. Then,
\be \nn
\begin{array}{ll} 
D_0=\{1,5^3\}=\{1,13\}, \\
D_1=\{5,5^4\}=\{5,9\}, \\
D_2=\{5^2,5^5\}=\{11,3\}. \\
\end{array}
\ee Here we can see that  $\Z^*_{14}=\bigcup^{2}_{k=0} D_k$. The sequence $\underline{a}=(0,1,0,\zeta^2_3, 0,\zeta_3,0,0,0,\zeta_3,0,\zeta^2_3,0,1,\dots)$ is an almost ternary NPS of type $(0,-1)$ with eight zero-symbols.
\end{example}

\begin{lemma}\label{lem:sum-D_k}
    Let $q$ be a prime such that $q=mf+1$ for some $m,f\in\Z^+$. Let  $\alpha$ be a generator of $\Z_q^*$. Then the sum of the elements of $D_k$ is $0$, i.e.
    \be \nn
    \left(\sum^{f-1}_{i=0}\alpha^{mi+k}\right) \mod q \equiv 0
    \ee
    for $k =0,1,\ldots, m-1$. Moreover, if $f$ is even, then
$$\alpha^{mi+k}+ \alpha^{m(i+\frac{f}{2})+k}\equiv 0 \mod q$$ for $i=0,1,\ldots,\frac{f}{2}-1.$
\end{lemma}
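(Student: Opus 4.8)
The plan is to prove both claims by exhibiting the elements of $D_k$ as roots of a polynomial and using elementary symmetric function identities, combined with an explicit pairing argument for the second part.

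First I would handle the second (stronger) statement, since it implies the first when $f$ is even, and it is the cleaner computation. Write $\beta = \alpha^m$, which has multiplicative order $f$ in $\Z_q^*$ since $\alpha$ has order $q-1 = mf$. Assume $f$ is even. Then $\beta^{f/2}$ is an element of order $2$ in the cyclic group $\Z_q^*$, and since $\Z_q^*$ is cyclic it has a unique such element, namely $-1$. Hence $\beta^{f/2} \equiv -1 \bmod q$, and therefore for each $i$,
\[
\alpha^{mi+k} + \alpha^{m(i+f/2)+k} = \alpha^{mi+k}\bigl(1 + \beta^{f/2}\bigr) \equiv \alpha^{mi+k}\cdot 0 \equiv 0 \bmod q.
\]
This is the pairing claim. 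Summing over $i = 0,1,\ldots,\tfrac{f}{2}-1$ then immediately gives $\sum_{i=0}^{f-1}\alpha^{mi+k} \equiv 0 \bmod q$ in the case $f$ even, since the $f$ terms split into $\tfrac{f}{2}$ such vanishing pairs.

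Next I would treat the first statement for general $f$ (not necessarily even). The set $\{\alpha^{mi} : i=0,\ldots,f-1\}$ is precisely the set of $f$-th roots of unity in $\Z_q$, i.e.\ the set of solutions of $x^f = 1$ in the field $\Z_q$; equivalently these are the roots of $x^f - 1 = 0$. For $k$ fixed, multiplying through by $\alpha^k$ shows that $D_k = \{\alpha^{mi+k}\}$ is the set of roots of $x^f - \alpha^{fk} = 0$ (since $(\alpha^{mi+k})^f = \alpha^{fk}\cdot(\alpha^{mi})^f = \alpha^{fk}$, and there are exactly $f$ distinct such elements). By Vieta's formulas, the sum of the roots of the monic polynomial $x^f - \alpha^{fk}$ equals the negative of the coefficient of $x^{f-1}$, which is $0$ as long as $f \geq 2$; hence $\sum_{i=0}^{f-1}\alpha^{mi+k} \equiv 0 \bmod q$. (The degenerate case $f=1$ forces $q-1 = m$, so each $D_k$ is a singleton $\{\alpha^k\}$ and the sum is not zero in general, so one should assume $f\geq 2$, which is implicit in the sequence setting; alternatively note $f=1$ makes the $\sigma$-sequence trivial.)

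I do not expect a serious obstacle here: both parts reduce to standard facts about the cyclic group $\Z_q^*$ and about sums of roots of a binomial polynomial over a field. The only point requiring a moment's care is the identification $\beta^{f/2} \equiv -1$, which uses that $\Z_q^*$ is cyclic and therefore has a unique element of order $2$ — this is where primality of $q$ is genuinely used. An alternative route for the main identity, avoiding polynomials, is to note that $x \mapsto \beta x$ permutes $D_k$ (it is a bijection of $D_k$ onto itself since $\beta D_k = D_k$), so if $S_k = \sum_{d\in D_k} d$ then $\beta S_k = S_k$, whence $(\beta - 1)S_k \equiv 0 \bmod q$; since $\beta \not\equiv 1$ (as $f \geq 2$ means $\beta$ has order $>1$) and $q$ is prime, $S_k \equiv 0$. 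I would likely present this multiplication-by-$\beta$ argument as the main proof and mention the Vieta viewpoint as a remark, then append the explicit pairing for the ``moreover'' clause.
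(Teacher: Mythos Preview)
Your proof is correct, and for the ``moreover'' clause it is essentially identical to the paper's: both factor out $\alpha^{mi+k}$ and observe that $\alpha^{mf/2}=\beta^{f/2}=-1$. For the main identity the paper simply sums the geometric series,
\[
\sum_{i=0}^{f-1}\alpha^{mi+k}=\alpha^k\cdot\frac{1-\alpha^{mf}}{1-\alpha^m}=0,
\]
which is the computational version of your multiplication-by-$\beta$ argument (indeed, telescoping $(\beta-1)S_k=0$ is exactly how one derives that formula); your Vieta route is a mild repackaging of the same fact. Your remark that one needs $f\ge 2$ (equivalently $\alpha^m\neq 1$) is a fair caveat the paper leaves implicit.
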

\begin{proof}
We can write the sum of elements in $D_k$  as
 \be \nn
 \dd 
     \sum^{f-1}_{i=0}\alpha^{mi+k}  \equiv\alpha^k \left(\dd\sum^{f-1}_{i=0}\alpha^{mi}\right)
      \equiv\alpha^k\left(\frac{1- \alpha^{mf}}{1-\alpha^m}\right)
    \ee
    for $k =0,1,\ldots, m-1$. Since $\alpha^{mf}\equiv 1 \mod q$, the proof of the first part is completed. Suppose that $f$ is even, then the equation array
    \be \nn
        \alpha^{mi+k}+\alpha^{m(i+\frac{f}{2})+k}
        \equiv\alpha^{mi+k}(1+\alpha^{\frac{mf}{2}}) 
        \equiv 0 \mod q
    \ee
    proves the second part.
\end{proof}

\begin{remark}
We note that Lemma \ref{lem:sum-D_k} also holds for an integer $q = 2p = mf +l$ such that $m,f\in\Z^+$ and $l = p-1$, see Example \ref{ex:D_k}.
\end{remark}
Lemma \ref{lem:sum-D_k} gives us a necessary condition on the existence of an almost $m$-ary $\sigma$-sequence. We state this in the following proposition. 
\begin{proposition} \label{prop:sym} Let $\sigma$ be a linear permutation of $\Z_m$. If either of the following holds
    \begin{enumerate}
        \item[i.] $q$ is a prime such that $q=mf+1$ for some $m\in\Z^+$ and some even integer $f$, \item[ii.]  $q= 2p = mf+l$ for some $m\in\Z^+$, some even integer $f$ and $l=p-1$,
    \end{enumerate}
    then the almost $m$-ary $\sigma$-sequence is symmetric except for $a_0$.
\end{proposition}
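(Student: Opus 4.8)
The plan is to show that in each of the two cases the permutation $\sigma$ being linear, together with the pairing from Lemma~\ref{lem:sum-D_k}, forces $a_i = a_{q-i}$ for all $i \neq 0$. First I would fix a generator $\alpha$ of $\Z_q^*$ (in case (ii), $\Z_q^*$ still makes sense because $q = 2p$ and the $D_k$ as defined exhaust $\Z_q^* = \bigsqcup_k D_k$ by the Remark following Lemma~\ref{lem:sum-D_k}), and record that $-1 \equiv q-1$ lies in $\Z_q^*$, so $-1 = \alpha^t$ for some exponent $t$. The key observation is that under the hypothesis that $f$ is even, Lemma~\ref{lem:sum-D_k} tells us $\alpha^{mi+k} + \alpha^{m(i+f/2)+k} \equiv 0 \bmod q$, i.e.\ $-\alpha^{mi+k} = \alpha^{m(i+f/2)+k}$; hence $-1 = \alpha^{mf/2}$, which shows that multiplication by $-1$ maps $D_k$ to itself for every $k$. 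That is the crux: negation preserves each cyclotomic class.

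Given that, the argument finishes quickly. For $i \neq 0$ in $\Z_q^*$, write $i \in D_{\sigma(k)}$, so $a_i = \zeta_m^k$ by Definition~\ref{df:m-ary sigma seq.1}. Since $-1 \cdot D_{\sigma(k)} = D_{\sigma(k)}$, the element $q - i \equiv -i$ also lies in $D_{\sigma(k)}$, hence $a_{q-i} = \zeta_m^k = a_i$. This is exactly the asserted symmetry "except for $a_0$." In case (i) the index set runs over $1,\dots,q-1$ and $a_0 = 0$ is the sole exception; in case (ii) one also has $a_p = 0$ and $a_i = 0$ for all even $i$ as in Proposition~\ref{thm:m_ary-const1}, but the pairing $i \mapsto q-i = 2p-i$ preserves the parity of $i$, so it maps the zero-positions among $\{1,\dots,q-1\}$ to zero-positions and the nonzero ones to nonzero ones with equal value; either way the sequence restricted to the nonzero part is symmetric about its center and $a_0$ is untouched.

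I should be slightly careful about one routine point: whether "symmetric except for $a_0$" is to be read as $a_i = a_{n-i}$ for $1 \le i \le n-1$ (with $n = q$ the period), matching the convention used in Theorem~\ref{thm:sym} and Corollary~\ref{cor:sym}; I would state it in that form so the conclusion is unambiguous. The one genuine obstacle — and it is the only place the hypotheses "$f$ even" and "linear $\sigma$" are actually used — is verifying that negation stabilizes each class $D_k$; everything downstream is bookkeeping. Note linearity of $\sigma$ plays no role in the symmetry itself (it is inherited from the cyclotomic structure), but I would keep the hypothesis as stated since Definition~\ref{df:m-ary sigma seq.1} and the surrounding results are phrased for linear $\sigma$; alternatively one could remark that the statement holds verbatim for an arbitrary permutation $\sigma$, the point being solely that $-D_k = D_k$.
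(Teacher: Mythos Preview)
Your proposal is correct and follows essentially the same route as the paper: use the second part of Lemma~\ref{lem:sum-D_k} to deduce that $-1=\alpha^{mf/2}$, hence negation fixes each cyclotomic class $D_k$, and conclude $a_i=a_{q-i}$. Your write-up is in fact more careful than the paper's (you explicitly treat the zero positions in case~(ii) and correctly observe that the linearity of $\sigma$ is not actually needed for the symmetry conclusion).
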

\begin{proof}
By using the second part of Lemma \ref{prop:sym} and that $\sigma$ is linear, we have $q-i \in D_{\sigma(k)}$ whenever $i \in D_{\sigma(k)}$ for any $k$. Therefore, $a_i = \zeta^k = a_{q-i}$.
\end{proof}

   For example, take the sequence $\underline{a}=(0,1,0,\zeta^2_3,0,\zeta_3,0,0,0,\zeta_3,0,\zeta^2_3,0,1,\ldots)$ which is symmetric except for $a_0$.

Now, we  present a construction of a family of quaternary nearly perfect sequences of period $q$ of type $\frac{q-3}{2}$. First of all, we present a few well-known properties of cyclotomic numbers.
We define the cyclotomic numbers $(k,l)$ as
\be \label{eq:cyc_nbr}
(k,l)=|(D_k+1)\cap D_l|, 
\ee
for $0\leq k, l \leq f-1$, where $D_k$ is the $m^{th}$ cyclotomic class of $\Z_q$ defined in Definition \ref{df:cyc-cst}.

\begin{lemma} \cite{dickson1935cyclotomy} \label{prop:cyc_nbr}
Let $q=mf+1$ be a prime for some $m,f \in \Z^+$ and $(k,l)$ be a cyclotomic number for some $0\leq k, l \leq f-1$. Then,
\begin{itemize}
    \item[(i)]     $ 
	(k,l)= \left\lbrace 
	\begin{array}{ll}  
	(m-k,l-k)&  \\ 
	(l,k)& \mbox{ f even }  \\
	(l+\frac{m}{2},k+\frac{m}{2})& \mbox{ f odd.}
	\end{array} \right.
$
\item[(ii)] 
$
	\dd \sum_{l=0}^{m-1}(k,l)= \left\lbrace 
	\begin{array}{ll}  
	f-1&  \mbox{ f even and  k=0, }\\ 
	f-1& \mbox{ f odd and } k=\frac{m}{2} \\
	f& \mbox{ otherwise.}
	\end{array} \right.
$
\item[(iii)]  
$
	\dd \sum_{k=0}^{m-1}(k,k+n)= \left\lbrace 
	\begin{array}{ll}  
	f-1&  \mbox{ n=0, } \\
	f& \mbox{ otherwise.}
	\end{array} \right.
$
\end{itemize}
\end{lemma}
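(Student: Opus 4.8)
The final statement to prove is Lemma \ref{prop:cyc_nbr}, which collects three standard identities on cyclotomic numbers $(k,l)$ of order $m$ attributed to Dickson. Since this is a well-known classical lemma, the plan is to reconstruct the short counting arguments rather than cite a black box.

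The plan is as follows. First I would fix the primitive element $\alpha$ of $\Z_q^*$ and recall that $x \in D_k$ iff $\ind_\alpha(x) \equiv k \pmod m$, so that $(k,l) = |\{x : x \in D_k,\ x+1 \in D_l\}|$. For part (i), the three symmetries come from three bijections on the solution set. Multiplying a solution $x$ (with $x\in D_k$, $x+1\in D_l$) by $-1$: write $-x-1 = -(x+1)$; since $-1 = \alpha^{mf/2}$ when $f$ is even and $-1 = \alpha^{(m\cdot(\text{odd}))/2}$-type index otherwise, one tracks how the index of $-1$ shifts the classes, giving the $f$-even versus $f$-odd split between $(l,k)$ and $(l+\tfrac m2, k+\tfrac m2)$. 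The identity $(k,l) = (m-k, l-k)$ comes from the substitution $x \mapsto x^{-1}$: if $x \in D_k$ and $x+1 \in D_l$, then $x^{-1} \in D_{-k}= D_{m-k}$ and $x^{-1}+1 = (x+1)x^{-1} \in D_{l-k}$, which is a bijection between the relevant solution sets. For part (ii), I would sum over $l$: $\sum_{l=0}^{m-1}(k,l)$ counts all $x \in D_k$ such that $x+1 \in \Z_q^*$, i.e. $x \ne -1$; so the sum is $|D_k|=f$ unless $-1 \in D_k$, in which case it is $f-1$. Then $-1\in D_0$ exactly when $f$ is even (as $-1=\alpha^{mf/2}$ forces $mf/2 \equiv 0$), and $-1 \in D_{m/2}$ when $f$ is odd; this reproduces the three cases. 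For part (iii), I would sum over $k$ with $l = k+n$ fixed: $\sum_{k=0}^{m-1}(k,k+n)$ counts pairs $x$ with $x+1 = \alpha^n x \cdot(\text{unit adjustment})$... more precisely it counts $x \in \Z_q^*$ with $x+1 \in \Z_q^*$ and $\ind(x+1) - \ind(x) \equiv n$, i.e. $(x+1)/x \equiv \alpha^n$ in $\Z_q^*/(\Z_q^*)^m$. Writing $y = (x+1)/x = 1 + 1/x$, as $x$ ranges over $\Z_q \setminus\{0,-1\}$, $y$ ranges over $\Z_q\setminus\{0,1\}$ bijectively; so the count is $|\{y \in \Z_q \setminus\{0,1\} : y \in D_n\}|$, which is $f$ unless $1 \in D_n$ (i.e. $n=0$), giving $f-1$.

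The main obstacle, such as it is, is bookkeeping the index of $-1$ correctly in the two parity regimes of $f$ — this is where the $f$ even / $f$ odd dichotomy in parts (i) and (ii) really originates, and it is easy to make an off-by-$\tfrac m2$ slip. One must note $m$ is necessarily even when $q$ is odd and $f$ is odd (since $mf = q-1$ is even), so $\tfrac m2$ makes sense exactly in that case. Everything else is a routine substitution-is-a-bijection argument, and no deep machinery is needed; I would present it compactly, proving (ii) and (iii) first as pure counting and then (i) via the two explicit bijections $x\mapsto x^{-1}$ and $x \mapsto -(x+1)$.

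\medskip

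\noindent\emph{Remark for the write-up.} Since this lemma is cited from \cite{dickson1935cyclotomy} and used only as a tool for the subsequent quaternary construction, it would also be acceptable to state it without proof; but the self-contained counting arguments above are short enough to include.
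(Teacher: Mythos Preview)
The paper does not prove this lemma at all: it is stated with the citation \cite{dickson1935cyclotomy} and used as a black box in the proof of Theorem~\ref{thm:m_ary-const2}. Your closing remark already anticipates this, and indeed simply citing the result is what the authors do.

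That said, your proposed self-contained arguments are correct. The key bookkeeping point you flag --- that $-1=\alpha^{mf/2}$ lies in $D_0$ when $f$ is even and in $D_{m/2}$ when $f$ is odd (which forces $m$ even, as you note) --- is exactly what drives the case splits in (i) and (ii). The two bijections $x\mapsto x^{-1}$ (giving $(k,l)=(m-k,l-k)$ via $x^{-1}+1=(x+1)x^{-1}\in D_{l-k}$) and $x\mapsto -(x+1)$ (giving $(k,l)=(l,k)$ or $(l+\tfrac{m}{2},k+\tfrac{m}{2})$ according to the class of $-1$) are the standard ones. Your argument for (iii) via $y=1+1/x$ ranging over $\Z_q\setminus\{0,1\}$ is clean and correct. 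If you choose to include the proof, it would add a short paragraph of value not present in the paper; if you omit it, you match the paper exactly.
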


\begin{theorem}\label{thm:m_ary-const2} 
	Let $q=4f+1$ be a prime. 
	Let $\mathfrak {D}_0=D_0\cup D_2$ and $\mathfrak{D}_1=D_1\cup D_3$, and let $\underline{a}=(a_0,a_1,\ldots,a_{q-1},\ldots)$ be a quaternary sequence of period $q$ defined by 
		\be \nn
	\underline{a}_i= \left\lbrace \begin{array}{ll}  \zeta_4^k&  \mbox{if } i\in \mathfrak{D}_k ,\\  0& otherwise,
	\end{array} \right.
	\ee
	where $k=0,1$.
	Then, the sequence $\underline{a}$ is an almost quaternary NPS of type $\gamma=\frac{q-3}{2}$ with one zero-symbol. 
\end{theorem}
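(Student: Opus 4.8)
The plan is to compute the out-of-phase autocorrelation coefficients $C_{\underline a}(t)$ directly from the cyclotomic-number machinery of Lemma~\ref{prop:cyc_nbr}, specialized to $m=4$. First I would record the two ingredients I need. Writing $\mathfrak D_0 = D_0\cup D_2$ and $\mathfrak D_1 = D_1\cup D_3$, note that $\mathfrak D_0$ is exactly the set of nonzero quadratic residues and $\mathfrak D_1$ the set of non-residues, so $\mathfrak D_1 = -1\cdot\mathfrak D_0$ when $-1$ is a non-residue (i.e.\ $q\equiv 1\bmod 4$ with $f$ odd) and $\mathfrak D_1 = \mathfrak D_0$'s coset structure is the familiar one; in either case $|\mathfrak D_0| = |\mathfrak D_1| = 2f = \frac{q-1}{2}$. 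The value $a_i\overline{a_{i+t}}$ is $0$ unless both $i$ and $i+t$ are nonzero, and otherwise equals $\zeta_4^{k-l}$ where $i\in\mathfrak D_k$, $i+t\in\mathfrak D_l$. Hence
\be \nn
C_{\underline a}(t) = \sum_{k,l\in\{0,1\}} \zeta_4^{\,k-l}\,\bigl|\{\, i : i\in\mathfrak D_k,\ i+t\in\mathfrak D_l \,\}\bigr|
= N_{00}+N_{11} - \zeta_4^2\,(N_{01}+N_{10}),
\ee
since $\zeta_4^{1-0}+\zeta_4^{0-1} = \zeta_4+\zeta_4^{-1} = 0$, where $N_{kl} = |\{i\in\mathfrak D_k : i+t\in\mathfrak D_l\}|$ and $\zeta_4^2=-1$. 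So $C_{\underline a}(t) = (N_{00}+N_{11}) + (N_{01}+N_{10})$, i.e.\ it equals the number of $i$ with $i\in\mathfrak D_0\cup\mathfrak D_1$ and $i+t\in\mathfrak D_0\cup\mathfrak D_1$ — but one must be careful because the cross terms came in with a $+1$, not a cancellation. Let me re-examine: the coefficient of $N_{01}$ is $\zeta_4^{0-1}=\zeta_4^{-1}=\zeta_4^3$, of $N_{10}$ is $\zeta_4^{1-0}=\zeta_4$, and $\zeta_4+\zeta_4^3=0$. So actually $C_{\underline a}(t) = N_{00}+N_{11} - $ nothing from the cross terms only if $N_{01}=N_{10}$; in general $C_{\underline a}(t) = N_{00}+N_{11}+\zeta_4 N_{10}+\zeta_4^3 N_{01}$, which is real iff $N_{01}=N_{10}$. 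I would first establish $N_{01}=N_{10}$ (this is a symmetry: replacing $i$ by $-(i+t)$ and using that $-1$ maps $\mathfrak D_0\to\mathfrak D_{\epsilon}$, $\mathfrak D_1\to\mathfrak D_{\epsilon}$ consistently for the appropriate $\epsilon\in\{0,1\}$ depending on the parity of $f$), so that $C_{\underline a}(t) = N_{00}+N_{11} - N_{01}$ — wait, still need to resolve this; with $N_{01}=N_{10}=:N$, $C_{\underline a}(t) = N_{00}+N_{11}+N(\zeta_4+\zeta_4^3) = N_{00}+N_{11}$. Good: once $N_{01}=N_{10}$, the coefficient is simply $N_{00}+N_{11}$, an integer, independent of further structure.

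Next I would express $N_{00}+N_{11}$ in terms of cyclotomic numbers. For a fixed $t$, scaling by $t^{-1}$ shows $N_{kl}$ depends only on the coset $D_s$ containing $t$: if $t\in D_s$ then $N_{kl} = \sum_{\text{appropriate }k',l'} (k'-s,\ l'-s)$ summed over the pairs $(k',l')$ with $k'\equiv k$, $l'\equiv l \pmod 2$, using the standard identity $|\{x\in D_{k'} : x+t\in D_{l'}\}| = (k'-s, l'-s)$ up to the usual shift conventions. Concretely $N_{00}+N_{11} = \sum_{k'\in\{0,2\}}\sum_{l'\in\{0,2\}}(k'-s,l'-s) + \sum_{k'\in\{1,3\}}\sum_{l'\in\{1,3\}}(k'-s,l'-s)$. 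I would then invoke parts (i)–(iii) of Lemma~\ref{prop:cyc_nbr} to collapse this sum. The cleanest route: observe $N_{00}+N_{01}+N_{10}+N_{11} = |\{i\in\mathfrak D_0\cup\mathfrak D_1 : i+t\in\mathfrak D_0\cup\mathfrak D_1\}| = |\{i\in\Z_q^* : i+t\in\Z_q^*\}| = q-2$ (for $t\neq 0$, excluding $i=0$ and $i=-t$). Combining with $N_{01}=N_{10}=N$ gives $N_{00}+N_{11} = q-2-2N$. So it remains to show $N = \frac{q-1}{4}$ — equivalently $|\{i : i\in\mathfrak D_0,\ i+t\in\mathfrak D_1\}| = \frac{q-1}{4}$ for every nonzero $t$ — which would give $C_{\underline a}(t) = q-2 - \frac{q-1}{2} = \frac{q-3}{2}$, as claimed. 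That count $N$ is, up to coset shift, a sum of cyclotomic numbers over a union of cosets, and part (iii) (the "$\sum_k (k,k+n)$" identity) together with part (ii) should pin it to $\frac{q-1}{4}$ regardless of which coset $t$ lies in; the case split on $f$ even/odd via part (i) is what makes $N$ coset-independent.

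The main obstacle I anticipate is the bookkeeping in the identity $N_{kl} = (k-s,\,l-s)$ — getting the shift conventions exactly right (the cyclotomic number $(k,l)$ counts solutions of $1 + D_k = D_l$, and translating "$i\in D_{k'}$, $i+t\in D_{l'}$, $t\in D_s$" into that form requires dividing through by $t$ and tracking which index becomes which), and then the case analysis on the parity of $f$ needed to show the cross-term symmetry $N_{01}=N_{10}$ and the coset-independence of $N$. I would handle $f$ even and $f$ odd separately, using $(k,l)=(l,k)$ in the even case and $(k,l)=(l+\frac m2, k+\frac m2)$ in the odd case from Lemma~\ref{prop:cyc_nbr}(i) (with $m=4$, so $\frac m2 = 2$, which is exactly the shift relating $D_0\leftrightarrow D_2$ and $D_1\leftrightarrow D_3$ — a happy coincidence that makes the union-of-cosets sums behave well). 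The $i=0$ zero-symbol contributes nothing and $|\mathfrak D_0\cup\mathfrak D_1| = q-1$ confirms there is exactly one zero-symbol; finally one checks the claimed value $\gamma=\frac{q-3}{2}$ is consistent with the constant-on-nonzero-$t$ conclusion, completing the proof that $\underline a$ is an almost quaternary NPS of type $\frac{q-3}{2}$ with one zero-symbol.
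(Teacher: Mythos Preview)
Your proposal is correct and is essentially the paper's argument: both decompose $C_{\underline a}(t)$ into the four counts $N_{kl}$, use the total $\sum_{k,l}N_{kl}=q-2$, and show the cross counts equal $f$ via the cyclotomic-number identities of Lemma~\ref{prop:cyc_nbr}, so that $C_{\underline a}(t)=(q-2)-2f+f(\zeta_4+\zeta_4^3)=2f-1$. One correction: the case split actually needed in the cyclotomic bookkeeping is on the parity of $n$ where $t\in D_n$ (that is, on whether $t\in\mathfrak D_0$ or $t\in\mathfrak D_1$), not on the parity of $f$; the paper uses only the unconditional identity $(k,l)=(m-k,l-k)$ from part~(i), and then parts~(ii)--(iii) handle the two cases $n\equiv 0,1\bmod 2$.

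As an aside, your quadratic-residue observation is sharper than you exploit and in fact yields a shorter route than the paper's. Since $q\equiv 1\bmod 4$, $-1$ is always a residue, so $-\mathfrak D_k=\mathfrak D_k$ for $k=0,1$ and the involution $i\mapsto -(i+t)$ gives $N_{01}=N_{10}$ with no case split on $f$ at all; moreover the classical character-sum identity $\sum_{i\in\Z_q}\chi(i)\chi(i+t)=-1$ for the Legendre symbol $\chi$ gives $(N_{00}+N_{11})-(N_{01}+N_{10})=-1$ directly, whence $N_{00}+N_{11}=\tfrac{q-3}{2}$ without touching order-$4$ cyclotomy.
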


\begin{proof}
 The  quaternary sequence $\underline{a}$ has the autocorrelation coefficients
\be \nn 
\begin{array}{ll}
C_{\underline{a}}(t)&=\dd \sum_{i=0}^{q-1}{a_i\overline{a_{i+t}}} \\
&=\dd \sum_{k=0}^{1}|\ell : \ell \in \mathfrak{D}_{k},\ell +t \in \mathfrak{D}_{k}|+\zeta_4 |\ell : \ell \in \mathfrak{D}_{1},\ell +t \in \mathfrak{D}_{0}|+ \zeta_4^3|\ell : \ell \in \mathfrak{D}_{0},\ell +t \in \mathfrak{D}_{1}|.
\end{array}
\ee
Define $\Lambda_1$ and $\Lambda_2$ as $\Lambda_1 := |\ell : \ell \in \mathfrak{D}_{1},\ell +t \in \mathfrak{D}_{0}|$ and $\Lambda_2 := |\ell : \ell \in \mathfrak{D}_{0},\ell +t \in \mathfrak{D}_{1}|$ for some $t=1,2,\ldots,q-1$.
Then, \be \label{eqn:sum_dk} \sum_{k=0}^{1}|\ell : \ell \in \mathfrak{D}_{k},\ell +t \in \mathfrak{D}_{k}|+\Lambda_1+ \Lambda_2=4f-1.\ee 
If $t\in D_n$ for some $n=0,1,2,3$, then
\be \nn
\begin{array}{ll}
  \Lambda_1
  & =|\ell : \ell \in D_1\cup D_3,\ell +t \in D_0\cup D_2|\\
  &=|((D_1\cup D_3)+t)\cap ( D_0\cup D_2)|\\
  &=|((t^{-1}D_1+1)\cap t^{-1}D_0)\cup((t^{-1}D_1+1)\cap t^{-1}D_2)\\
  & \qquad \cup((t^{-1}D_3+1)\cap t^{-1}D_0)\cup((t^{-1}D_3+1)\cap t^{-1}D_2)| \\
  &=(1-n,0-n)+(1-n,2-n)+(3-n,0-n)+(3-n,2-n)\\
  &=(3+n,3)+(3+n,1)+(1+n,1)+(1+n,3),
\end{array}
\ee
where the last equality follows from Lemma \ref{prop:cyc_nbr}-(i). 
If $n\equiv 0 \mod{2} $, then, by using Lemma \ref{prop:cyc_nbr}-(i) and (ii), we have\be \nn
\begin{array}{ll} \dd
   \Lambda_1  =(3,1)+(3,3)+(1,1)+(1,3)=
\sum_{i=0}^{3}(1,i)=f.
    \end{array}
\ee
If $n\equiv 1 \mod{2} $, then 
\be \nn \dd
   \Lambda_1 =(0,1)+(0,3)+(2,1)+(2,3).
\ee
By Lemma \ref{prop:cyc_nbr}-(i) and (iii), 
\be \nn
\begin{array}{ll} \dd
   \sum_{k=0}^{3}(k,k+1)+\sum_{k=0}^{3}(k,k+3)  = \sum_{i=0}^{3}(1,i)+ (0,1)+(0,3)+(2,1)+(2,3)  = 2f. 
\end{array}
\ee
We know that $\sum_{i=0}^{3}(1,i) = f$ by Lemma \ref{prop:cyc_nbr}-(ii).  So, similar to case $n\equiv 0 \mod{2} $,  we get $\Lambda_1=f$ for $n\equiv 1 \mod{2} $.

Similarly, we can see $\Lambda_2=f$ for $n=0,1,2,3$. Thus, by \eqref{eqn:sum_dk}, we get $\sum_{k=0}^{1}|\ell : \ell \in \mathfrak{D}_{k},\ell +t \in \mathfrak{D}_{k}| =2f-1$. Hence, \be \nn \dd
  C_{\underline{a}}(t)=2f-1+\zeta_4 f+ \zeta_4^3 f = 2f-1.
\ee
This completes the proof.
\end{proof}

\begin{example}
	Let $q=29$, $f=7$, and $\alpha= 2$. Then,
	\be \nn
	\begin{array}{ll} 
		D_0 = \{1, 16, 24, 7, 25, 23, 20\}, \\
		D_1 = \{2, 3, 19, 14, 21, 17, 11\}, \\
		D_2=\{4, 6, 9, 28, 13, 5, 22\}, \\
		D_3=\{8, 12, 18, 27, 26, 10, 15\}.\\
	\end{array}
	\ee By Theorem \ref{thm:m_ary-const2}, we get 
	$\underline{a}= (0, 1, \zeta_4, \zeta_4, 1, 1, 1, 1, \zeta_4, 1, \zeta_4, \zeta_4, \zeta_4, 1, \zeta_4, \zeta_4, 1, \zeta_4, \zeta_4, \zeta_4, 1, \zeta_4, 1, 1, 1,1, \zeta_4, \linebreak  \zeta_4, 1,\ldots).$	
	The sequence $\underline{a}$  is an almost quaternary NPS of type $13$ with one zero-symbol.
	
	On the other hand, if we take $\zeta_2$ instead of $\zeta_4$ in $\underline{a}$, then we get $\underline{b}= (0, 1, \zeta_2, \zeta_2, 1, 1, 1, 1, \zeta_2, 1, \zeta_2, \linebreak \zeta_2, \zeta_2, 1, \zeta_2, \zeta_2, 1, \zeta_2, \zeta_2, \zeta_2, 1, \zeta_2, 1, 1, 1, 1, \zeta_2, \zeta_2, 1,\ldots).$
	The sequence $\underline{b}$  is an almost binary NPS of type $-1$ with one zero-symbol.
\end{example}
In the next example, we see that the method in Theorem \ref{thm:m_ary-const2} cannot be used for 8-ary or 6-ary sequences.
\begin{example}
    Let $q=17$, $f=2$, and $\alpha=6$. If we consider $\mathfrak{D}_0=D_0\cup D_2\cup D_4\cup D_6$ and $\mathfrak{D}_1=D_1\cup D_3\cup D_5\cup D_7$ then we get that $\underline{a}_1=(0, 1, 1, \zeta_8, 1, \zeta_8, \zeta_8, \zeta_8, 1, 1,\zeta_8, \zeta_8, \zeta_8, 1, \zeta_8, 1, 1)$ is an almost $8$-ary NPS of type $-4\zeta_8^3 + 4\zeta_8 + 7$ with one zero-symbol. If we consider $\mathfrak{D}_0=D_0\cup D_4$, $\mathfrak{D}_1=D_1\cup D_5$, $\mathfrak{D}_2=D_2\cup D_6$, and $\mathfrak{D}_3=D_3\cup D_7$ then we get that $\underline{a}_2=(0, 1, \zeta_8^2,\zeta_8^3, 1, \zeta_8^3, \zeta_8, \zeta_8, \zeta_8^2, \zeta_8^2, \zeta_8, \zeta_8, \zeta_8^3, 1, \zeta_8^3, \zeta_8^2, 1)$  is an  almost $8$-ary sequence with three distinct autocorrelation coefficients: $(3,-2\zeta_8^3 + 2\zeta_8 + 3,-4\zeta_8^3 + 4\zeta_8 + 3)$.
     Next, let $q=19$, $f=3$, and $\alpha=10$. If we consider $\mathfrak{D}_0=D_0\cup D_2\cup D_4$ and $\mathfrak{D}_1=D_1\cup D_3\cup D_5$ then we get that $\underline{a}_1=(0, 1, \zeta_6, \zeta_6, 1, 1, 1, 1, \zeta_6, 1, \zeta_6, 1, \zeta_6, \zeta_6, \zeta_6, \zeta_6, 1, 1, \zeta_6)$ is an almost $6$-ary NPS of type $(-\zeta_6 + 13,
\zeta_6 + 12)$ with one zero-symbol. If we consider $\mathfrak{D}_0=D_0\cup D_3$, $\mathfrak{D}_1=D_1\cup D_4$,and $\mathfrak{D}_2=D_2\cup D_5$, then we get that $\underline{a}_2=(0, 1, \zeta_6^2, \zeta_6^2, \zeta_6, \zeta_6^2, \zeta_6, 1, 1, \zeta_6, \zeta_6, 1, 1, \zeta_6, \zeta_6^2, \zeta_6, \zeta_6^2, \zeta_6^2, 1)$  is an almost $6$-ary sequence with three distinct autocorrelation coefficients: $(9,7,5)$.
\end{example}
 
\section{Conclusion}
As nearly perfect sequences (NPS) with consecutive two zero-symbols are very rare, we considered the sequences with non-consecutive zero-symbols and  only one zero-symbol in this paper. And so, we devised some methods to construct such sequences.

We firstly showed that an almost $m$-ary NPS with two non-consecutive zero-symbols for a prime number $m$ has an equivalent  difference set definition, namely $\ell$-partial direct product difference sets (PDPDS). Then we constructed a family of $\ell$-PDPDS. By using the equivalent difference set definition, we extended the results on the almost $m$-ary NPS with two consecutive zero-symbols to the non-consecutive case.   Also, we proved that an almost $m$-ary NPS with two consecutive zero-symbols is symmetric. 

Next, we presented that one can construct an almost $m$-ary NPS with  non-consecutive zero-symbols by using their multipliers and orbit combination in contrast to consecutive case. We proved that the auto-correlation coefficients of an $m$-ary sequence are invariant under some operations, so that we could construct new $m$-ary NPS from an existing one. Finally, we gave two sequence construction methods. In one of them, we constructed a family of NPS consisting of zero-symbols as many as non-zero-symbols and only two out-of phase autocorelation coefficients $0$ and $-1$. The second one has only one zero-symbol and one big out-of phase autocorelation coefficient. 

\section*{Acknowledgement}
The authors are supported by the Scientific and Technological Research	Council of Turkey (TÜBİTAK) under Project No: \mbox{116R026}. We would like to thank Ünal Güneş Tiryaki for fruitful discussions.

\bibliographystyle{splncs03}
\bibliography{ozdenyayla}  
	
\end{document}